\documentclass[letterpaper,10pt,conference]{ieeeconf}

\IEEEoverridecommandlockouts
\usepackage{amsmath,amssymb}
\usepackage{mathrsfs}
\usepackage{mathtools}

\usepackage{graphicx}
\usepackage{tikz}
\usetikzlibrary{arrows.meta,positioning,shapes}
\usetikzlibrary{decorations.pathreplacing}
\usepackage{booktabs}
\usepackage{multirow}
\usepackage{pgfplots}
\usepackage{pgfplotstable}
\pgfplotsset{compat=1.18}
\usepackage{algorithm}
\usepackage{algpseudocode}
\usepackage{comment}
\usepackage{hyperref}

\usepackage{cite}

\newtheorem{thm}{Theorem}[section]
\newtheorem{prop}{Proposition}[section]
\newtheorem{rem}{Remark}[section]

\newtheorem{lem}{Lemma}[section]
\newtheorem{defn}{Definition}

\newtheorem{assump}{Assumption}
\newcommand{\boxend}{\hfill \ensuremath{\Box}}

\allowdisplaybreaks

\title{
Submodular Welfare under Routing Coupling: A Hierarchical Decomposition with Perturbation Guarantees
}

\author{%
  Joan Vendrell Gallart$^{1}$ \quad, Nhat-Minh Tang-Nguyen$^{1}$ \quad, Alan Kuhnle$^{2}$ \quad and \quad Solmaz Kia$^{1}$%
  \thanks{This work was supported in part by NSF.}%
  \\[0.3em]
  $^{1}$Mechanical and Aerospace Engineering, University of California, Irvine \\
  $^{2}$Computer Science and Engineering Department of the Texas A\&M University, Texas
}

\begin{document}
\maketitle

\begin{abstract}
This paper studies joint submodular welfare maximization and routing over graphs, where agents select items under diminishing returns and transport them through a network with congestion-dependent costs. Although welfare maximization admits matroid-based approximations and routing reduces to shortest paths under modular costs, their coupling creates supermodular interactions that break separability.
We show that, for fixed routing, the objective remains submodular in the allocation variable, enabling a principled decomposition. Building on this property, we propose the Welfare-based Hierarchical Routing Algorithm (\texttt{WHIRL}), which alternates between tractable routing and allocation updates. Routing is initialized through its modular counterpart, while supermodular effects are modeled as bounded perturbations.
The method has finite convergence guarantees and approximation bounds that depend explicitly on the deviation from modular routing. Numerical results illustrate the impact of routing-induced coupling and show that \texttt{WHIRL} achieves a favorable tradeoff between solution quality and computational cost.
\end{abstract}

\section{Introduction}
\label{sec::intro}

The \emph{Submodular Welfare Problem}~\cite{nemhauser1978analysis,feige2006allocation} is a fundamental model in combinatorial optimization, where $m$ discrete items in $\mathcal{Q}=\{1,\cdots,m\}$ are distributed among $n$ agents, each endowed with a monotone submodular utility function $f_i : 2^{\mathcal{Q}} \to \mathbb{R}_{\geq 0}$, $i\in\mathcal{P}=\{1,\cdots,n\}$, that captures diminishing marginal returns. The objective is to find a non-overlapping partition $(\mathcal{S}_1,\ldots,\mathcal{S}_n)$, $\cup_{j\in\mathcal{P}}\mathcal{S}_j=\mathcal{Q}$, maximizing the aggregate welfare $\sum_{i\in\mathcal{P}} f_i(\mathcal{S}_i)$. A classical approach reformulates this problem over a partition matroid by lifting the ground set to $X = \mathcal{P} \times \mathcal{Q}$ and defining
\begin{equation*}
    \mathcal{I} = \big\{\mathcal{S} \subseteq X :
    |\mathcal{S} \cap (\mathcal{P} \times \{j\})| \leq 1, \forall\, j\in\mathcal{Q}\big\},
\end{equation*}
which enforces that each item in $\mathcal{Q}$ is assigned to at most one agent in $\mathcal{P}$; see Fig.~\ref{fig:welfare_problem}. This reduction casts welfare maximization as submodular maximization under a matroid constraint, for which a $\tfrac{1}{2}$-approximation via the Sequential Greedy Algorithm (\texttt{SGA}) is achievable~\cite{vondrak2008optimal}.

A natural and practically relevant extension arises when agents are embedded in a transportation network. In particular, consider providers $i \in \mathcal{P}$ located on a directed weighted graph $\mathcal{G} = (\mathcal{V}, \mathcal{E}, w)$, where $|\mathcal{P}| = n$. Each provider $i$ must not only select a set of items $\mathcal{S}_i$, but also transport them to a designated client node $c \in \mathcal{V}$ via a feasible directed path $\mathcal{R}_i \in \Omega_i$. Here, $\Omega_i \subseteq 2^{\mathcal{E}}$ denotes the set of all available routes for agent $i$, where each route $\mathcal{R}_i$ is defined as a subset of edges $e \in \mathcal{E}$. The welfare objective is then augmented by a routing cost $T(\mathscr{S}, \mathscr{R})$, leading to the joint problem:
\begin{equation*}
    \max_{\mathscr{S},\,\mathscr{R}} \;\sum_{i\in\mathcal{P}} f_i(\mathcal{S}_i) - \alpha\, T(\mathscr{S}, \mathscr{R}),
\end{equation*}
where $\mathscr{S} = (\mathcal{S}_1,\ldots,\mathcal{S}_n)$ denotes the item partition, $\mathscr{R} = (\mathcal{R}_1,\ldots,\mathcal{R}_n)$ denotes the route assignment, and $\alpha \geq 0$ is a weighting parameter. This formulation reduces to the classical welfare problem when $T \equiv 0$ and arises naturally in applications such as electric vehicle routing, urban logistics~\cite{beckmann1956studies,sheffi1985urban}, and bandwidth-constrained feature acquisition~\cite{Yang2003}.

The resulting optimization problem is substantially more challenging than the standalone welfare formulation for two interconnected reasons. \emph{First}, the feasible set is the product
$\mathcal{I} \times \Omega_1 \times \cdots \times \Omega_n$ of allocation and path-selection decisions, yielding a multi-level combinatorial problem that is generally NP-hard~\cite{ahmed2003multi}. \emph{Second}, and more fundamentally, the routing cost
$T(\mathscr{S},\mathscr{R})$ introduces coupling \emph{across} providers: when routes share edges, the marginal cost incurred by one provider depends on the simultaneous load induced by the others, a hallmark of \emph{supermodularity}~\cite{beckmann1956studies,sheffi1985urban}. This cross-provider dependence destroys the additive separability underlying the standard welfare problem and prevents a direct application of classical approximation machinery.

\begin{figure}[t]
\centering
    \begin{tikzpicture}[scale=0.6, every node/.style={font=\footnotesize}]
    \tikzstyle{item}=[circle, draw, thick, minimum size=2mm, fill=blue!8]
    \tikzstyle{agent}=[rounded corners, draw, thick, minimum width=0.7cm, minimum height=0.1cm, fill=green!8]
    \tikzstyle{assign1}=[->, very thick, blue!70!black]
    \tikzstyle{assign2}=[->, very thick, red!70!black]
    \tikzstyle{assign3}=[->, very thick, orange!85!black]
    
    \node[font=\bfseries] at (0,4.4) {Items $\mathcal{Q}$};
    \node[font=\bfseries] at (8,4.4) {Agents $\mathcal{P}$};
    
    \node[item] (u1) at (0,3.2) {$1$};
    \node[item] (u2) at (0,2.1) {$2$};
    \node[item] (u3) at (0,1.0) {$3$};
    \node[item] (u4) at (0,-0.1) {$4$};
    \node[item] (u5) at (0,-1.2) {$5$};
    
    \node[agent] (a1) at (8,2.6) {Agent $1$ \quad $f_1(\mathcal{S}_1)$};
    \node[agent] (a2) at (8,0.8) {Agent $2$ \quad $f_2(\mathcal{S}_2)$};
    \node[agent] (a3) at (8,-1.0) {Agent $3$ \quad $f_3(\mathcal{S}_3)$};
    
    \draw[assign1] (u1.east) -- (a1.west);
    \draw[assign1] (u3.east) -- (a1.west);
    
    \draw[assign2] (u2.east) -- (a2.west);
    \draw[assign2] (u5.east) -- (a2.west);
    
    \draw[assign3] (u4.east) -- (a3.west);
    
    \node[align=left] at (4.0,3.2) {\textcolor{blue}{$\mathcal{S}_1=\{1,3\}$}};
    \node[align=left] at (3.7,0.7) {\textcolor{red}{$\mathcal{S}_2=\{2,5\}$}};
    \node[align=left] at (3.5,-1.1) {\textcolor{orange}{$\mathcal{S}_3=\{4\}$}};
    
    \end{tikzpicture}
    \caption{Illustration of the welfare maximization problem, where items are partitioned among agents so as to maximize the sum of agent valuations.}
    \label{fig:welfare_problem}
\end{figure}

A natural attempt to recover tractability is to mimic the partition matroid lifting of the welfare problem by enlarging the ground set to include provider--item--path triples,
\[
X' = \mathcal{P} \times \mathcal{Q} \times \bigcup_{i=\mathcal{P}} \Omega_i,
\]
and then imposing a generalized partition matroid. However, this approach also presents two structural drawbacks. Computationally, the number of simple paths in $\mathcal{G}$ grows exponentially with $|\mathcal{V}|$, rendering direct enumeration intractable~\cite{chekuri2005recursive}. On the theoretical side, supermodularity of $T$ induces positive cross-partial differences on shared network resources, as the marginal transportation cost of routing one provider's items increases as other providers load the same infrastructure. This interdependence violates the independent structure required by matroid-based formulations \cite{Whitney1935}.

Alternatively, when only addressing computational complexity, the literature has largely treated the two subproblems in isolation. The pure \emph{allocation} problem under nonlinear utilities is well understood through submodular welfare theory~\cite{jin2021usmmc}, whereas the pure \emph{routing} problem is classically addressed by shortest-path methods on directed graphs~\cite{chekuri2005recursive}. Joint formulations, however, remain limited. The closest related work~\cite{zhang2016submodular} considers submodular objectives under knapsack-type routing constraints and establishes a $\frac{1}{2}(1-1/e)$-approximation. However, it remains fundamentally limited in two aspects. First, routing is treated purely as a \emph{feasibility constraint} rather than as a decision variable to be jointly optimized, which essentially differs from our setting. Additionally, the model is restricted to \emph{additive routing costs}, thereby excluding congestion-dependent or capacity-weighted regimes. As a result, it does not capture the coupled allocation--routing structure we study, and a unifying framework accommodating linear and nonlinear routing with joint optimization remains missing.

The key structural observation of this paper is that, although each local utility $f_i$ is submodular
(Assumption~\ref{assump:submodularity_utility}) and the routing cost $T$ is supermodular
(Assumption~\ref{assump:supermodularity_cost}), their difference remains \emph{submodular} in the allocation variable; see Lemmas~\ref{lem:sub_plus_sub} and~\ref{lem:sub_minus_super}. While this follows from standard closure properties, it reveals that routing-induced coupling does not destroy the submodular structure of the allocation problem. This observation enables us to retain tractable optimization despite the loss of separability.

Building on this structural insight, this paper introduces a joint allocation and routing framework that covers both additive and non-additive transportation costs. We first establish that the coupled formulation preserves submodularity in the allocation variable and derive a hierarchical conditioned decomposition. We then characterize the modular routing regime, where routing becomes additively separable across providers, and use this case to motivate an efficient initialization mechanism. Crucially, we model the supermodular routing effects as a bounded perturbation of the modular case, which allows us to quantify the impact of routing-induced coupling on solution quality. Finally, we propose a hierarchical decomposition-based algorithm with convergence guarantees and derive approximation bounds that explicitly depend on the deviation from modular routing.

The remainder of this paper is organized as follows. Section~\ref{sec:statement}
formalizes the routing and allocation model as a set-function optimization problem. Section~\ref{sec:problem_structure} establishes the relevant submodular properties and derives the hierarchical decomposition, including decoupling under modular routing costs. Section~\ref{sec:methods} presents the proposed optimization framework and its theoretical guarantees. Section~\ref{sec:case} provides numerical results, and Section~\ref{sec:conclusion} concludes the paper.

\begin{figure}[t]
\centering
\begin{tikzpicture}[scale=1.0, every node/.style={font=\small}]

\tikzstyle{vtx}=[circle, draw, thick, minimum size=7mm, fill=blue!8]
\tikzstyle{edge}=[draw=black!60, thick]
\tikzstyle{routeA}=[draw=blue!75!black, very thick, ->]
\tikzstyle{routeB}=[draw=red!75!black, very thick, ->]
\tikzstyle{quadopt}=[draw=green!50!black, very thick, dashed, ->]
\tikzstyle{cost}=[fill=white, inner sep=1pt]

\node[vtx, fill=green!15] (s1) at (0.5,2.2) {$u_1$};
\node[vtx, fill=green!15] (s2) at (0.5,0.8) {$u_2$};

\node[vtx] (a) at (2.5,3.0) {$a$};
\node[vtx] (b) at (2.5,1.5) {$b$};
\node[vtx] (c) at (2.5,0.0) {$c$};

\node[vtx] (d) at (4.5,2.4) {$d$};
\node[vtx] (e) at (4.5,0.6) {$e$};

\node[vtx, fill=red!15] (t) at (6.5,1.5) {$v$};

\draw[edge] (s1) -- (a) node[midway, above left, cost] {$2$};
\draw[edge] (s1) -- (b) node[midway, above, cost] {$1$};

\draw[edge] (s2) -- (b) node[midway, below, cost] {$1$};
\draw[edge] (s2) -- (c) node[midway, below left, cost] {$2$};

\draw[edge] (a) -- (d) node[midway, above, cost] {$2$};
\draw[edge] (a) -- (b) node[midway, left, cost] {$2$};

\draw[edge] (b) -- (d) node[midway, above right, cost] {$2$};
\draw[edge] (b) -- (e) node[midway, below right, cost] {$1$};

\draw[edge] (c) -- (e) node[midway, below, cost] {$2$};
\draw[edge] (d) -- (t) node[midway, above right, cost] {$2$};
\draw[edge] (e) -- (t) node[midway, below right, cost] {$1$};

\draw[edge] (d) -- (e) node[midway, right, cost] {$1$};

\draw[routeA] (s1) -- (b);
\draw[routeA] (b) -- (e);
\draw[routeA] (e) -- (t);

\draw[routeB] (s2) -- (b);
\draw[routeB] (b) -- (e);
\draw[routeB] (e) -- (t);

\draw[line width=5pt, draw=orange!40, opacity=0.45] (b) -- (e);
\draw[line width=5pt, draw=orange!40, opacity=0.45] (e) -- (t);

\draw[quadopt] (s1) -- (b);
\draw[quadopt] (b) -- (d);
\draw[quadopt] (d) -- (t);

\draw[quadopt] (s2) -- (b);
\draw[quadopt] (b) -- (e);
\draw[quadopt] (e) -- (t);
\end{tikzpicture}
\caption{Comparison between modular and quadratic routing costs. Solid blue/red routes illustrate the overlapping solution preferred under modular costs, while dashed green routes illustrate the optimal joint solution under quadratic costs, where one route deviates to reduce overlap.}
\label{fig:quadratic_routing_overlap}
\end{figure}

\section{Problem Statement}
\label{sec:statement}

We consider a discrete resource allocation problem in which a central
network authority must optimally distribute a set of $m$ discrete items $\mathcal{Q} = \{1,\cdots,m\}$,
requested by a single client and distributed among $\kappa$ providers. The providers and
client are interconnected via a transportation network modeled as a directed
weighted graph $\mathcal{G} = (\mathcal{V}, \mathcal{E}, w)$, where
$\mathcal{V} = \{1, \ldots, N\}$ is the set of nodes,
$\mathcal{E} \subseteq \mathcal{V} \times \mathcal{V}$ is the set of directed
edges, and $w : \mathcal{E} \rightarrow \mathbb{R}_{\geq 0}$ is the edge
weight function assigning a non-negative cost to each edge
$(i,j) \in \mathcal{E}$ \cite{Kalaivani2024}. Intuitively, the weight function $w$ encodes travel costs, latency, distance, or capacity, see Section~\ref{sec:case}.

Without loss of generality, providers are located at nodes
$\{1, \ldots, \kappa\} \subset \mathcal{V}$, and the client resides at a
designated node $c\in\mathcal{V}$. Hence, each provider $i\in\mathcal{P}=\{1,\cdots,\kappa\}$ is associated with a finite, non-empty set of directed simple paths $(i,\cdots,c)\in\mathscr{R}_i*$ in $\mathcal{G}$. Hereafter, a \emph{feasible allocation} is a partition $\mathscr{S}=(\mathcal{S}_1,\cdots,\mathcal{S}_\kappa)$ of $\mathcal{Q}$ where $\mathcal{S}_i\subseteq\mathcal{Q}$ denotes the (possibly empty) subset of items assigned to provider $i\in\mathcal{P}$. 

Each provider $i$ is associated with a local utility function
$f_i : 2^{\mathcal{Q}} \rightarrow \mathbb{R}$, where $2^{\mathcal{Q}}$
denotes the power set of $\mathcal{Q}$, mapping any item subset
$\mathcal{S}_i \subseteq \mathcal{Q}$ to a real-valued score representing the utility of that allocation for provider $i$. This formulation is sufficiently general to encode provider-side costs through the
sign convention of $f_i$, see Definitions~\ref{assum:normality} and \ref{assum:monotone}.

\begin{defn}[Normality]
    \label{assum:normality}
    A function $f$ is normal if $f(\emptyset)=0$.
\end{defn}

\begin{defn}[Increasing Monotonicity]
    \label{assum:monotone}
    A function $f$ is monotone increasing if for any $\mathscr{S}_1\subseteq\mathscr{S}_2\subseteq2^{X}$ we have $ f(\mathscr{S}_2)\geq f(\mathscr{S}_1)$. 
\end{defn}

In addition to the local utilities, the transportation network $\mathcal{G}$ imposes a routing cost on any given allocation. For a feasible allocation $\mathscr{S}=(\mathcal{S}_1,\cdots,\mathcal{S}_\kappa)$ and a route assignment $\mathscr{R}=(\mathcal{R}_1,\cdots,\mathcal{R}_\kappa)$, where $\mathcal{R}_i\in\mathscr{R}_i*$, the total routing cost is denoted as $T(\mathscr{S},\mathscr{R})$. Then, the optimal resource allocation problem is formulated as follows:
\begin{subequations}
\begin{align}
    \max_{\mathscr{S},\mathscr{R}} \quad & \sum_{i=1}^\kappa f_i(\mathcal{S}_i) - \alpha \cdot T(\mathscr{S},\mathscr{R}) \label{eqn:utility}\\
    \text{subject to} \quad & \mathcal{S}_i \subset \mathcal{Q}, \qquad~~ \forall i \in \mathcal{P} \label{eqn:constrain_1}\\
    & \mathcal{S}_i \cap \mathcal{S}_j = \emptyset, \quad \forall i \neq j, ~(i,j)\in\mathcal{P} \label{eqn:constrain_2}\\
    & \bigcup_{i=1}^\kappa \mathcal{S}_i = \mathcal{Q}, \label{eqn:constrain_3} \\
    & \mathcal{R}_i\in\mathscr{R}_i*, \qquad~~ \forall i \in\mathcal{P}, \label{eqn:constrain_4}
\end{align}
\label{eqn:problem2}
\end{subequations}
where $\mathscr{S}$ is the desired item allocation assigning each of $\mathcal{Q}$ to exactly one provider in $\mathcal{P}$, and $\mathscr{R}$ are the corresponding route assignments over the edges $\mathcal{E}$ used for transportation over the network. Constraints~\eqref{eqn:constrain_1}--\eqref{eqn:constrain_3} jointly enforce that $\mathscr{S}$ is a valid partition of $\mathcal{Q}$ such that every item is assigned to exactly one provider, and the union of all assigned disjoint subsets recovers $\mathcal{Q}$. The use of $\subseteq$ in~\eqref{eqn:constrain_1},
rather than strict inclusion $\subset$, explicitly permits empty assignments,
i.e., a provider may receive no items. Constraint~\eqref{eqn:constrain_4}
ensures that each provider selects exactly one path from its admissible path
set $\mathscr{R}_i$ for transporting its allocated items to the
client.\footnote{The original constraint $|\mathscr{R} \cap \mathscr{R}_i*|=1$
has been restated as $\mathcal{R}_i \in \mathscr{R}_i*$ for notational clarity
and consistency with the tuple definition
$\mathscr{R} = (\mathcal{R}_1, \ldots, \mathcal{R}_\kappa)$.}

Importantly, the routing cost $T(\mathscr{S},\mathscr{R})$ is not separable across providers, as shared edges induce interactions that depend on the joint routing decisions, see Assumption~\ref{assump:supermodularity_cost}. In particular, we consider routing costs that exhibit supermodular behavior with respect to the allocation, capturing congestion and shared-resource effects. As a result, the resulting optimization problem is inherently coupled and cannot be decomposed into independent allocation and routing subproblems. It will be explicitly defined in Section~\ref{sec:case}.

\begin{assump}
    \label{assump:submodularity_utility}
    The local objective function $f_i(\mathcal{S}):2^\mathcal{Q}\rightarrow\mathbb{R}_{\geq 0}$ associated with a subset of items $\mathcal{S}\subset\mathcal{Q}$ representing the benefit of allocating $\mathcal{S}$ in provider $i$ is submodular as it exhibits diminishing returns with the increase of the allocated items~\cite{FeigeVondrak2006allocation}. 
\end{assump}

\begin{assump}
    \label{assump:supermodularity_cost}
    The routing cost $T(\mathscr{S};\mathscr{R})$ across providers is individually supermodular with respect to $\mathscr{S}$ as it exhibits the increasing returns property with the increase of transported items \cite{supermodulartransport}. 
\end{assump}

\noindent\emph{Notation and definitions}: For a discrete ground set $\mathcal{P}$, $2^{\mathcal{P}}$ is its power set, the set that contains all the subsets of $\mathcal{P}$. A set function $f: 2^{\mathcal{P}} \rightarrow \mathbb{R}_{ \geq 0}$ is \textit{submodular} if and only if for any $\mathcal{P}_2 \subseteq \mathcal{P}_1 \subseteq {\mathcal{P}}$ and for all $p \in \mathcal{P} \setminus \mathcal{P}_1$ we have that
\begin{equation}
 \label{eqn:diminishing_returns}
 f(\mathcal{P}_2 \cup \{ p \}) - f(\mathcal{P}_2) \geq f(\mathcal{P}_1 \cup \{ p \}) - f(\mathcal{P}_1)
\end{equation}
Function $f$ is normal if $f(\emptyset)=0$ and  is monotone increasing if  for any $\mathcal{P}_1\subset\mathcal{P}_2\subset\mathcal{P}$ we have $ f(\mathcal{P}_2)\geq f(\mathcal{P}_1)$. For any $p\in\mathcal{P} $ and any $\mathcal{P}\subset\mathcal{P}$, $\Delta (p|\mathcal{P})=f(\mathcal{P} \cup \{ p\}) - f(\mathcal{P})$ is the marginal gain of adding $p$ to the set $\mathcal{P}$. 


\section{Combinatorial Structure Of Joint Allocation And Routing}
\label{sec:problem_structure}

By Assumptions~\ref{assump:submodularity_utility} and \ref{assump:supermodularity_cost}, as mentioned in Section~\ref{sec::intro}, the allocation and routing decisions are intrinsically coupled, and a direct decoupling would generally compromise optimality. In this section, we characterize the combinatorial structure of the joint problem and show that, despite this coupling, the objective preserves submodularity with respect to the allocation variable. This property is the key enabler for the approximation guarantees developed in Section~\ref{sec:methods}.

\subsection{Allocation Structure and Routing Coupling}

We begin by reformulating Problem~\eqref{eqn:utility} as
\begin{subequations}
\begin{align}
    \max_{\mathscr{S},\mathscr{R}} \quad & f(\mathscr{S},\mathscr{R}) = F(\mathscr{S}) - \alpha \cdot T(\mathscr{S},\mathscr{R}) \label{eqn:utility2}\\
    \text{subject to} \quad & \mathscr{S} \in \mathcal{I}, \label{eqn:constrain_1_2}\\
    & \mathcal{R}_i \in \mathscr{R}_i, \qquad \forall i \in \mathcal{P}, \label{eqn:constrain_2_2}
\end{align}
\label{eqn:problem}
\end{subequations}
where $F(\mathscr{S}) = \sum_{i=1}^\kappa f_i(\mathcal{S}_i)$ and
{\scriptsize
$$
\mathcal{I}=\Big\{ \mathscr{S}=(\mathcal{S}_1,\cdots,\mathcal{S}_\kappa) \;\Big|\; \mathcal{S}_i\subseteq\mathcal{Q},\;
\mathcal{S}_i\cap\mathcal{S}_j=\emptyset~\forall i\neq j,\;
\bigcup_{i=1}^{\kappa}\mathcal{S}_i=\mathcal{Q} \Big\}.
$$
}

The allocation constraints admit a classical partition matroid representation. Consider the ground set $X = \mathcal{P} \times \mathcal{Q}$, where each pair $(i,q)$ denotes assigning item $q$ to provider $i$. The independent sets are given by
\[
\mathcal{I}_{\mathcal{P}\times\mathcal{Q}}
=
\left\{
A \subseteq \mathcal{P} \times \mathcal{Q}
\;\middle|\;
|A \cap (\mathcal{P} \times \{q\})| \le 1,\;\forall q \in \mathcal{Q}
\right\},
\]
which enforce that each item is assigned to at most one provider. This defines a partition matroid over $X$.

In contrast, the routing component does not, in general, induce a matroid structure. The feasible routing set $\mathscr{R}_i$, consisting of all source-to-client paths for each provider, forms a combinatorial family that does not satisfy the exchange axiom. Therefore, the joint problem should be interpreted as a coupling between a matroid-constrained allocation problem and a combinatorial routing subproblem.

\subsection{Submodularity of the Joint Objective}

We now establish the key structural property of the objective function.

\begin{lem}
    Given a submodular function $f$ and another submodular function $g$, their sum $h = f+g$ is submodular.
    \label{lem:sub_plus_sub}
\end{lem}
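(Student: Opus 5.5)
The plan is to verify the diminishing-returns inequality~\eqref{eqn:diminishing_returns} for $h$ directly from the corresponding inequalities for $f$ and $g$. Both functions are defined on the same power set $2^{\mathcal{P}}$. For two sets this is the setting of the paper, where the relevant ground set is $X=\mathcal{P}\times\mathcal{Q}$.

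Fix $\mathcal{P}_2\subseteq\mathcal{P}_1\subseteq\mathcal{P}$ and $p\in\mathcal{P}\setminus\mathcal{P}_1$. First, I write the marginal gain of $h$ as the sum of the marginal gains of $f$ and $g$, using linearity of the difference operator:
\begin{equation*}
h(\mathcal{P}_k\cup\{p\})-h(\mathcal{P}_k)=\big[f(\mathcal{P}_k\cup\{p\})-f(\mathcal{P}_k)\big]+\big[g(\mathcal{P}_k\cup\{p\})-g(\mathcal{P}_k)\big],
\end{equation*}
for $k=1,2$.

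Second, I apply~\eqref{eqn:diminishing_returns} separately to $f$ and to $g$ with the same triple $(\mathcal{P}_2,\mathcal{P}_1,p)$. This gives two inequalities with the same orientation. Third, I add them termwise. Combined with the decomposition above, the sum yields
\begin{equation*}
h(\mathcal{P}_2\cup\{p\})-h(\mathcal{P}_2)\ge h(\mathcal{P}_1\cup\{p\})-h(\mathcal{P}_1),
\end{equation*}
which is exactly submodularity of $h$.

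No step here is a genuine obstacle; the argument is a one-line closure property. The only point needing care is bookkeeping. Both functions must share a domain so that the marginal gains are evaluated at identical sets. The range condition $\mathbb{R}_{\ge 0}$ in the paper's definition is irrelevant to the inequality, and $h$ inherits nonnegativity automatically when $f,g\ge 0$. By induction, the same argument extends to any finite sum $F(\mathscr{S})=\sum_{i}f_i(\mathcal{S}_i)$, which is how the lemma will be used.
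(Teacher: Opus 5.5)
Your proof is correct and is exactly the standard argument the paper defers to by citation: split the marginal gain of $h$ into the marginal gains of $f$ and $g$, apply the diminishing-returns inequality to each with the same triple $(\mathcal{P}_2,\mathcal{P}_1,p)$, and add. One caveat concerns your closing aside about $F(\mathscr{S})=\sum_i f_i(\mathcal{S}_i)$, which bears on the application rather than the lemma: before the induction applies, each summand must be viewed as a function on $2^{X}$ via $A\mapsto f_i(\{q:(i,q)\in A\})$, and this lifted function is submodular because adding $(j,q)$ changes its argument only when $j=i$.
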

\begin{proof}
    The proof is standard, see~\cite{Bach2013submodular}.
\end{proof}

\begin{lem}
    Given a submodular function $f$ and a supermodular function $g$, the function $h = f-g$ is submodular.
    \label{lem:sub_minus_super}
\end{lem}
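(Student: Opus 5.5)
The plan is to reduce the statement to Lemma~\ref{lem:sub_plus_sub} by showing that $-g$ is submodular whenever $g$ is supermodular, and then to write $h=f+(-g)$. Throughout, I use the diminishing-returns characterization~\eqref{eqn:diminishing_returns}. I take supermodularity of $g$ to mean the reverse inequality: for $\mathcal{P}_2\subseteq\mathcal{P}_1\subseteq\mathcal{P}$ and $p\in\mathcal{P}\setminus\mathcal{P}_1$,
\[
g(\mathcal{P}_2\cup\{p\})-g(\mathcal{P}_2)\le g(\mathcal{P}_1\cup\{p\})-g(\mathcal{P}_1).
\]
This is the increasing-returns notion invoked in Assumption~\ref{assump:supermodularity_cost}.

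\textbf{Step 1.} Multiply this inequality by $-1$. This flips it into
\[
(-g)(\mathcal{P}_2\cup\{p\})-(-g)(\mathcal{P}_2)\ge(-g)(\mathcal{P}_1\cup\{p\})-(-g)(\mathcal{P}_1),
\]
which is exactly~\eqref{eqn:diminishing_returns} for $-g$. So $-g$ is submodular.

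\textbf{Step 2.} Apply Lemma~\ref{lem:sub_plus_sub} to $f$ and $-g$ to conclude that $h=f-g$ is submodular.

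Equivalently, one can argue directly without the lemma. The marginal gain of $h$ is $\Delta_h(p\mid\mathcal{P}_2)=\Delta_f(p\mid\mathcal{P}_2)-\Delta_g(p\mid\mathcal{P}_2)$. Bound the first term below by $\Delta_f(p\mid\mathcal{P}_1)$ using submodularity of $f$. Bound the subtracted term using $\Delta_g(p\mid\mathcal{P}_2)\le\Delta_g(p\mid\mathcal{P}_1)$. Adding the two inequalities gives $\Delta_h(p\mid\mathcal{P}_2)\ge\Delta_h(p\mid\mathcal{P}_1)$.

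\textbf{Main obstacle.} The algebra is trivial, so the real care is in the framework. The codomain convention in the Notation paragraph restricts $f$ to $\mathbb{R}_{\ge 0}$, and $f-g$ can be negative. I would remark that submodularity is a property of marginal differences only, so the diminishing-returns definition is applied to real-valued set functions. Consequently $h$ need not be normal or monotone, and the lemma claims neither.

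I would also check that $f$ and $g$ are defined on the same ground set. In the intended use this ground set is the lifted $X=\mathcal{P}\times\mathcal{Q}$, with the route assignment $\mathscr{R}$ held fixed so that $T(\cdot,\mathscr{R})$ is a set function of the allocation alone. This is the sense in which Assumption~\ref{assump:supermodularity_cost} gives supermodularity "individually" in $\mathscr{S}$.
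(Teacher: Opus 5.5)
Your proposal is correct and follows the same route as the paper, which notes that $-g$ is submodular and invokes Lemma~\ref{lem:sub_plus_sub}; your Step~1 simply makes the sign flip explicit. Your observation about the codomain is also worthwhile: the paper's notation defines submodularity only for $\mathbb{R}_{\geq 0}$-valued functions, so for $-g$ and $h$ the diminishing-returns definition must be read for real-valued set functions, which the paper's one-line proof leaves implicit.
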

\begin{proof}
    Since $-g$ is submodular, the result follows from Lemma~\ref{lem:sub_plus_sub}.
\end{proof}

\begin{thm}[Submodularity in the allocation variable]
    \label{thm:problem_submodularity}
    For any fixed routing assignment $\mathscr{R}$, the function
    \[
        f_{\mathscr{R}}(\mathscr{S}) = F(\mathscr{S}) - \alpha T(\mathscr{S},\mathscr{R})
    \]
    is submodular in $\mathscr{S}$.
\end{thm}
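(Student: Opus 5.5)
The plan is to identify $\mathscr{S}$ with a subset of the lifted ground set $X=\mathcal{P}\times\mathcal{Q}$, namely $A_{\mathscr{S}}=\{(i,q): q\in\mathcal{S}_i\}$. Submodularity "in $\mathscr{S}$" then means submodularity of the set function $A\mapsto f_{\mathscr{R}}(A)$ on $2^{X}$. With $\mathscr{R}$ fixed, $T(\cdot,\mathscr{R})$ is a set function of the allocation alone. The theorem should then follow from Lemma~\ref{lem:sub_minus_super} once $F$ is shown to be submodular on $2^X$ and $\alpha T(\cdot,\mathscr{R})$ is shown to be supermodular on $2^X$.

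\emph{Step 1: $F$ is submodular on $2^X$.} Write $F(A)=\sum_{i\in\mathcal{P}} f_i(A_i)$, where $A_i=\{q:(i,q)\in A\}$ is the slice of $A$ belonging to provider $i$. Take $A\subseteq B\subseteq X$ and $(j,q)\in X\setminus B$. Adding $(j,q)$ changes only the $j$-th summand, so
\[
F(A\cup\{(j,q)\})-F(A)=f_j(A_j\cup\{q\})-f_j(A_j).
\]
Here $A_j\subseteq B_j$ and $q\notin B_j$. Assumption~\ref{assump:submodularity_utility} and \eqref{eqn:diminishing_returns} then give that this marginal is at least $f_j(B_j\cup\{q\})-f_j(B_j)$, which equals $F(B\cup\{(j,q)\})-F(B)$. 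Each summand $A\mapsto f_i(A_i)$ is thus submodular, and Lemma~\ref{lem:sub_plus_sub}, applied inductively over $i$, gives the same conclusion for the sum.

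\emph{Step 2: $\alpha T(\cdot,\mathscr{R})$ is supermodular.} Assumption~\ref{assump:supermodularity_cost} states that $T(\mathscr{S};\mathscr{R})$ is supermodular in $\mathscr{S}$ for each fixed $\mathscr{R}$. I will read this in the lifted sense: marginal costs of adding a pair $(j,q)$ are nondecreasing along $A\subseteq B$. A nonnegative multiple of a supermodular function is supermodular, because multiplying the increasing-differences inequality by $\alpha\ge 0$ preserves it. When $\alpha=0$ the term vanishes, which is trivially modular.

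\emph{Step 3: conclude.} By Lemma~\ref{lem:sub_minus_super}, $f_{\mathscr{R}}=F-\alpha T(\cdot,\mathscr{R})$ is submodular on $2^X$. Restricting to allocations in $\mathcal{I}$ does not affect the inequality, since the inequality is a property of the function and not of the constraint.

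The main obstacle is interpretive and sits in Step 2. Assumption~\ref{assump:supermodularity_cost} is phrased loosely ("individually supermodular"), and I must pin it down as supermodularity on the lifted ground set $X$, for each fixed $\mathscr{R}$. Under that reading the rest is routine. If the assumption were meant only provider-wise, i.e.\ supermodular in each $\mathcal{S}_i$ with the others held fixed, I would also need nonnegative cross-provider interactions. I would justify these from the congestion model: on shared edges the load is additive across providers and the edge cost is convex, so these interactions are nonnegative.
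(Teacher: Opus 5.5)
Your proposal is correct and follows the paper's own argument: $F$ is submodular, $\alpha T(\cdot,\mathscr{R})$ is supermodular by Assumption~\ref{assump:supermodularity_cost}, and the difference is submodular by Lemma~\ref{lem:sub_minus_super}. Your Step~1, which checks that $\sum_i f_i(\mathcal{S}_i)$ is submodular on the lifted ground set $X=\mathcal{P}\times\mathcal{Q}$, and your explicit use of $\alpha\ge 0$ just fill in details the paper leaves implicit.

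One caution applies only to your fallback aside, which the main proof does not need. In the paper's model \eqref{eqn:cost_utility_function}, the load $|\bigcup_{j:\,e\in\mathcal{R}_j}\mathcal{S}_j|$ on edge $e$ is additive only on disjoint allocations; on all of $2^X$ it is a coverage function. Also, $x^\beta$ is convex only for $\beta\ge 1$. So that congestion argument would not give supermodularity in general.
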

\begin{proof}
    By Assumption~\ref{assump:submodularity_utility}, $F(\mathscr{S})$ is submodular in $\mathscr{S}$. By Assumption~\ref{assump:supermodularity_cost}, $T(\mathscr{S},\mathscr{R})$ is supermodular in $\mathscr{S}$ for any fixed $\mathscr{R}$. Therefore, $-\alpha T(\mathscr{S},\mathscr{R})$ is submodular, and the result follows from Lemma~\ref{lem:sub_plus_sub}.
\end{proof}

Note that Theorem~\ref{thm:problem_submodularity} holds regardless of whether $T(\mathscr{S},\mathscr{R})$ is modular or supermodular. In the modular case, the routing term becomes separable and independent of $\mathscr{S}$, while in the supermodular regime it introduces coupling across providers, yet preserves submodularity of the objective with respect to $\mathscr{S}$.

\subsection{Hierarchical Decomposition of Allocation and Routing}

The joint optimization problem couples two decision variables: the allocation
$\mathscr{S}$ and the routing $\mathscr{R}$. While the allocation lies in a
partition matroid domain, the routing corresponds to a combinatorial path
selection problem. Despite this structural difference, the objective admits
a natural hierarchical decomposition.

In particular, for any fixed allocation $\mathscr{S}$, the routing subproblem
reduces to minimizing $T(\mathscr{S},\mathscr{R})$ over feasible paths. This
induces a reduced objective over $\mathscr{S}$, which forms the basis of the
proposed algorithm.

\begin{lem}[Finiteness of $\mathscr{R}$]
    \label{lem:R_finitness}
    Given a finite graph $\mathcal{G}$ and two nodes $u$ and $v$, the number of simple paths from $u$ to $v$ is finite.
\end{lem}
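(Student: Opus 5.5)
The plan is an injection argument into a finite set, using that a simple path never repeats a vertex. Let $\mathcal{G}=(\mathcal{V},\mathcal{E},w)$ with $|\mathcal{V}|=N<\infty$. A simple path from $u$ to $v$ is a sequence of vertices $(x_0,x_1,\ldots,x_k)$ with $x_0=u$, $x_k=v$, $(x_{j-1},x_j)\in\mathcal{E}$ for all $j$, and $x_j\neq x_\ell$ whenever $j\neq\ell$. The first step is to bound its length. Distinctness of the vertices gives $k+1\le N$, so every simple path has at most $N-1$ edges.

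The second step is to encode each path as an object in a finite set. The map
\[
(x_0,\ldots,x_k)\ \longmapsto\ (x_0,\ldots,x_k)\in\bigcup_{\ell=1}^{N}\mathcal{V}^{\ell}
\]
is injective, since a path is determined by its vertex sequence. The target is a finite union of finite sets, so its cardinality is at most $\sum_{\ell=1}^{N}N^{\ell}<\infty$. A sharper count is possible: injective sequences of length $\ell+1$ starting at $u$ and ending at $v$ number at most $(N-2)!/(N-1-\ell)!$, which gives the explicit bound $\sum_{\ell=0}^{N-2}(N-2)!/\ell!\le e\,(N-2)!$. This bound is not needed for finiteness. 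It is still worth recording, because it matches the exponential-growth remark in the introduction.

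The third step connects the result to the paper's notation. Each route $\mathcal{R}_i\in\mathscr{R}_i$ is treated as a set of edges. The map from a simple path to its edge set is well defined, so $|\mathscr{R}_i|$ is also finite. The same holds for the product $\mathscr{R}_1\times\cdots\times\mathscr{R}_\kappa$ of route assignments, as a finite product of finite sets. This is the form in which the lemma will be used: the minimum of $T(\mathscr{S},\mathscr{R})$ over $\mathscr{R}$ exists for each fixed $\mathscr{S}$, and any alternating scheme over allocation and routing terminates finitely.

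The only point requiring care, and the main obstacle, is precision about what ``simple'' means. If cycles were allowed, walks could be arbitrarily long and their number infinite. The entire argument therefore rests on the vertex-distinctness bound $k\le N-1$ in the first step. I will state that definition explicitly at the outset.
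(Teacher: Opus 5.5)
Your proof is correct and follows the same route as the paper: vertex distinctness bounds every simple path by $|\mathcal{V}|-1$ edges, and there are only finitely many vertex sequences of bounded length. Your explicit injection into $\bigcup_{\ell=1}^{N}\mathcal{V}^{\ell}$ and your refined count make precise the paper's one-line ``hence finite'' step, and the extension to the product $\mathscr{R}_1\times\cdots\times\mathscr{R}_\kappa$ matches how the lemma is used later in the convergence argument.
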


\begin{proof}
Since $\mathcal{G}$ has a finite number of nodes, any simple path can visit each node at most once, and therefore has length at most $|\mathcal{V}|-1$. Hence, the number of such paths is finite.
\end{proof}

\begin{lem}
    \label{lem:R_defined}
    For any fixed allocation $\mathscr{S}\in\mathcal{I}$, the routing problem is well-defined.
\end{lem}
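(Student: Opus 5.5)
The plan is to read ``well-defined'' as saying that, for fixed $\mathscr{S}\in\mathcal{I}$, the minimum
\[
T^\star(\mathscr{S})=\min_{\mathscr{R}\in\mathscr{R}_1\times\cdots\times\mathscr{R}_\kappa} T(\mathscr{S},\mathscr{R})
\]
exists and is attained by at least one admissible route assignment. Under this reading, the reduced objective $F(\mathscr{S})-\alpha T^\star(\mathscr{S})$ is a genuine real-valued function on $\mathcal{I}$. The argument then reduces to showing that the feasible routing set is finite and nonempty, and that $T$ takes real values on it.

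\textbf{Step 1 (finiteness).} For each provider $i\in\mathcal{P}$, the admissible set $\mathscr{R}_i$ consists of directed simple paths from node $i$ to the client $c$ in the finite graph $\mathcal{G}$. By Lemma~\ref{lem:R_finitness}, the set of all such paths is finite, and hence so is $\mathscr{R}_i$. The joint routing domain is the Cartesian product $\mathscr{R}_1\times\cdots\times\mathscr{R}_\kappa$ of finitely many finite sets, so it is finite, with cardinality $\prod_{i=1}^\kappa|\mathscr{R}_i|$.

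\textbf{Step 2 (nonemptiness).} Section~\ref{sec:statement} assumes that each $\mathscr{R}_i$ is nonempty, i.e., every provider can reach $c$. It follows that the product is nonempty, so at least one feasible $\mathscr{R}$ exists for every $\mathscr{S}$. This condition does not interact with the allocation, because the constraint set~\eqref{eqn:constrain_2_2} does not depend on $\mathscr{S}$.

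\textbf{Step 3 (attainment).} Fix $\mathscr{S}\in\mathcal{I}$. The map $\mathscr{R}\mapsto T(\mathscr{S},\mathscr{R})$ is real-valued: it is built from the nonnegative finite edge weights $w$ and the finitely many items. A real-valued function on a finite nonempty set attains its minimum, so $T^\star(\mathscr{S})$ exists and the set of minimizers is nonempty. If uniqueness is needed for the algorithm, I would add a fixed tie-breaking rule, for example lexicographic order on paths, to obtain a single-valued selection $\mathscr{R}^\star(\mathscr{S})$.

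The only step I expect to need care is the implicit assumption that $T$ is finite on every feasible pair. This has to be read off the model, since $T$ is only specified concretely in Section~\ref{sec:case}. I would justify it by noting that $w$ maps into $\mathbb{R}_{\ge 0}$ and that congestion terms are evaluated on finite loads. Everything else is immediate from Lemma~\ref{lem:R_finitness}.
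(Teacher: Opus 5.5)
Your proof is correct, and it proves more than the paper's proof does. The paper's argument is a single step. It assumes $\mathcal{G}$ is connected, concludes that each provider node has at least one path to the client $c$, and hence that a feasible routing $\mathscr{R}$ exists. In other words, it reads ``well-defined'' as feasibility of the routing problem.

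You differ in two ways. First, you take nonemptiness of each $\mathscr{R}_i$ directly from the modeling assumption in Section~\ref{sec:statement}, rather than from an added connectivity hypothesis. Second, you combine it with Lemma~\ref{lem:R_finitness} and real-valuedness of $T$ to show that the minimum in \eqref{eq:opt_routing} is actually attained.

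Your stronger reading is what Theorem~\ref{thm:hierarchical} and line~3 of Algorithm~\ref{alg:hier_greedy} rely on, since both treat $\mathscr{R}^\star(\mathscr{S})$ as an element of a nonempty $\arg\min$. The paper leaves that attainment step implicit, although it follows at once from the preceding finiteness lemma.

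Using the stated nonemptiness assumption also avoids a subtlety in the paper's version. For a directed graph, weak connectivity does not guarantee a directed path from each provider to $c$, so ``connected'' there must be read as directed reachability of $c$ from every provider node. The paper's route does have one advantage: it names a concrete graph-theoretic condition under which your nonemptiness hypothesis holds.
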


\begin{proof}
Assuming the graph $\mathcal{G}$ is connected, each provider node has at least one feasible path to the client node. Hence, a feasible routing $\mathscr{R}$ exists.
\end{proof}

\begin{thm}\label{thm:hierarchical}
For any $\alpha > 0$, the optimization
Problem~\eqref{eqn:problem} is equivalent to the single-variable problem
\begin{equation}\label{eq:reduced}
    \max_{\mathscr{S} \in \mathcal{I}}
    \Bigl( F(\mathscr{S}) - \alpha\, T\bigl(\mathscr{S},\,
    \mathscr{R}^\star(\mathscr{S})\bigr) \Bigr),
\end{equation}
where, for each fixed $\mathscr{S} \in \mathcal{I}$, the optimal
routing is
\begin{equation}\label{eq:opt_routing}
    \mathscr{R}^\star(\mathscr{S})
    \;=\;
    \operatorname*{arg\,min}_{\substack{\mathcal{R}:\,
    \mathcal{R}_i \in \mathscr{R}_i,\, \forall\, i \in \mathcal{P}}}
    \; T(\mathscr{S},\mathscr{R}).
\end{equation}
\end{thm}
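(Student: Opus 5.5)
The plan is to prove the equivalence by the standard "partial maximization" argument. Write the joint problem as a maximum over $\mathscr{S}$ of an inner maximum over $\mathscr{R}$. Two things are needed: the inner problem attains its optimum for every $\mathscr{S}\in\mathcal{I}$, and that optimum is exactly $F(\mathscr{S})-\alpha T(\mathscr{S},\mathscr{R}^\star(\mathscr{S}))$. Equivalence will mean two facts. The optimal values of~\eqref{eqn:problem} and~\eqref{eq:reduced} coincide. Moreover, $(\mathscr{S}^\star,\mathscr{R}^\star)$ solves~\eqref{eqn:problem} if and only if $\mathscr{S}^\star$ solves~\eqref{eq:reduced} and $\mathscr{R}^\star\in\mathscr{R}^\star(\mathscr{S}^\star)$.

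\textbf{Step 1: the inner problem is attained.} Fix $\mathscr{S}\in\mathcal{I}$. By Lemma~\ref{lem:R_finitness}, each $\mathscr{R}_i$ is finite. By Lemma~\ref{lem:R_defined}, each $\mathscr{R}_i$ is nonempty. Hence the product $\mathscr{R}_1\times\cdots\times\mathscr{R}_\kappa$ is a finite nonempty set. A real-valued function on such a set attains its minimum, so the $\arg\min$ in~\eqref{eq:opt_routing} is nonempty. Thus $T(\mathscr{S},\mathscr{R}^\star(\mathscr{S}))$ is well defined, even if the minimizer is not unique, since every minimizer gives the same value. Since $\alpha>0$ and $F(\mathscr{S})$ does not depend on $\mathscr{R}$, we have
\[
\max_{\mathscr{R}} \bigl(F(\mathscr{S})-\alpha T(\mathscr{S},\mathscr{R})\bigr)=F(\mathscr{S})-\alpha\min_{\mathscr{R}}T(\mathscr{S},\mathscr{R}).
\]
The right-hand side is the reduced objective.

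\textbf{Step 2: the optimal values coincide.} The set $\mathcal{I}$ is finite, because $\mathcal{Q}$ and $\mathcal{P}$ are finite. Therefore both outer maxima exist. For any feasible $(\mathscr{S},\mathscr{R})$, Step 1 gives $f(\mathscr{S},\mathscr{R})\le F(\mathscr{S})-\alpha T(\mathscr{S},\mathscr{R}^\star(\mathscr{S}))$, which is at most the optimal value of~\eqref{eq:reduced}. Conversely, let $\mathscr{S}^\star$ solve~\eqref{eq:reduced}. Then the pair $(\mathscr{S}^\star,\mathscr{R}^\star(\mathscr{S}^\star))$ is feasible for~\eqref{eqn:problem} and attains the reduced value. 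So the two optimal values are equal.

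\textbf{Step 3: the correspondence of optimizers.} Suppose $(\mathscr{S}^\star,\mathscr{R}^\star)$ solves~\eqref{eqn:problem}. If $\mathscr{R}^\star$ were not a minimizer of $T(\mathscr{S}^\star,\cdot)$, then replacing it by a minimizer would strictly increase the objective, because $\alpha>0$. That contradicts optimality. Given that $\mathscr{R}^\star$ is a minimizer, Step 2 shows that $\mathscr{S}^\star$ attains the reduced optimum. The converse direction is exactly the construction in Step 2.

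I expect no step to be a real obstacle; the only delicate point is bookkeeping. One issue is that $\mathscr{R}^\star(\mathscr{S})$ may be set-valued, so I will phrase everything in terms of the value $\min_{\mathscr{R}}T$. The other is the role of $\alpha>0$. If $\alpha=0$, the values still coincide, but optimal routings need not be $T$-minimizers. This is why the statement assumes $\alpha>0$.
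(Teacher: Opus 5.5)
Your proof is correct and uses the same partial-maximization argument as the paper, which bounds $f(\mathscr{S},\mathscr{R})\le F(\mathscr{S})-\alpha T(\mathscr{S},\mathscr{R}^\star(\mathscr{S}))$ pointwise and then maximizes over $\mathscr{S}$. Your version is simply more complete: it makes explicit several points the paper's one-line proof leaves implicit, namely that the inner $\arg\min$ is attained (via Lemmas~\ref{lem:R_finitness} and~\ref{lem:R_defined}), the reverse inequality from feasibility of $(\mathscr{S}^\star,\mathscr{R}^\star(\mathscr{S}^\star))$, and the correspondence of optimizers, which is where $\alpha>0$ is actually used.
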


\begin{proof}
For any fixed allocation $\mathscr{S}$, define
$\mathscr{R}^\star(\mathscr{S}) = \arg\min_{\mathscr{R}} T(\mathscr{S},\mathscr{R}).$ Then, for any feasible pair $(\mathscr{S},\mathscr{R})$, $T(\mathscr{S},\mathscr{R}^\star(\mathscr{S})) \le T(\mathscr{S},\mathscr{R}),$
which implies
$$f(\mathscr{S},\mathscr{R}) = F(\mathscr{S}) - \alpha T(\mathscr{S},\mathscr{R}) \le F(\mathscr{S}) - \alpha T(\mathscr{S},\mathscr{R}^\star(\mathscr{S})),$$
taking the maximum over $\mathscr{S}$ yields the result.
\end{proof}

This decomposition naturally motivates a coordinate-wise optimization strategy, where allocation and routing are updated iteratively.

\section{Methodology}
\label{sec:methods}

Building on the hierarchical decomposition established in Theorem~\ref{thm:hierarchical}, we propose the \textbf{W}elfare-based \textbf{Hi}erarchical \textbf{R}outing A\textbf{l}gorithm (\texttt{WHIRL}), Algorithm~\ref{alg:hier_greedy}. The proposed method alternates between the two coupled subproblems: routing and allocation. In doing so, it exploits the problem structure to reduce computational complexity while preserving approximation guarantees.

The method follows a classical coordinate-wise optimization scheme. Starting from an initial allocation, the routing subproblem in line~$3$ is solved for the current assignment. Then, fixing the resulting routes, the allocation subproblem in line~$5$ is solved. This alternating process is repeated until convergence, as established next.

\begin{lem}[Convergence]
    \label{lem:bounded_improvement}
    Given Algorithm~\ref{alg:hier_greedy} applied to Problem~\eqref{eqn:problem}, the sequence of iterates generated by \texttt{WHIRL} converges in a finite number of stages.
\end{lem}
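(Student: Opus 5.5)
The plan is a standard monotone-improvement argument for alternating (coordinate-wise) optimization over a finite feasible set. Write the iterates of Algorithm~\ref{alg:hier_greedy} as $(\mathscr{S}^{(k)},\mathscr{R}^{(k)})$ and track the joint objective $f(\mathscr{S},\mathscr{R}) = F(\mathscr{S}) - \alpha T(\mathscr{S},\mathscr{R})$ along the sequence.

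\textbf{Step 1: the feasible set is finite.} The allocation set $\mathcal{I}$ is finite, since it consists of partitions of the finite set $\mathcal{Q}$ into $\kappa$ labelled blocks, so $|\mathcal{I}| \le \kappa^m$. By Lemma~\ref{lem:R_finitness}, each $\mathscr{R}_i$ is finite, and by Lemma~\ref{lem:R_defined} it is nonempty. Hence $\mathcal{I}\times\mathscr{R}_1\times\cdots\times\mathscr{R}_\kappa$ is finite and nonempty, and $f$ takes only finitely many values on it. In particular, $f$ is bounded above.

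\textbf{Step 2: the objective is monotone along the iterates.} The routing step (line~3) sets $\mathscr{R}^{(k+1)}$ to a minimizer of $T(\mathscr{S}^{(k)},\cdot)$, as in \eqref{eq:opt_routing}. Since $\alpha>0$, this gives $f(\mathscr{S}^{(k)},\mathscr{R}^{(k+1)}) \ge f(\mathscr{S}^{(k)},\mathscr{R}^{(k)})$. The allocation step (line~5) maximizes the submodular function $f_{\mathscr{R}^{(k+1)}}$ from Theorem~\ref{thm:problem_submodularity}, which yields $f(\mathscr{S}^{(k+1)},\mathscr{R}^{(k+1)}) \ge f(\mathscr{S}^{(k)},\mathscr{R}^{(k+1)})$. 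Chaining the two inequalities gives a nondecreasing sequence of objective values.

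\textbf{Step 3: termination.} The sequence of values is nondecreasing and takes values in a finite set, so it can strictly increase only finitely many times, at most once per distinct attainable value. The algorithm's stopping rule halts once an iteration produces no change, or no strict improvement. Combining this with the finite number of strict increases gives termination after at most $|\mathcal{I}|\prod_i|\mathscr{R}_i|$ stages.

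\textbf{Main obstacle.} The hard part is the allocation step in Step~2. If line~5 uses \texttt{SGA}, the approximate maximizer need not dominate the incumbent $\mathscr{S}^{(k)}$, and monotonicity could fail. I would first try to resolve this with an acceptance rule: keep $\mathscr{S}^{(k)}$ whenever the greedy output is not better, or equivalently warm-start from it. This restores $f(\mathscr{S}^{(k+1)},\mathscr{R}^{(k+1)})\ge f(\mathscr{S}^{(k)},\mathscr{R}^{(k+1)})$.

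A second issue is cycling among ties when the objective value stalls while the iterates keep changing. This can be handled by fixing a deterministic tie-breaking order in both argmin and argmax, and by stopping whenever the value fails to strictly increase. With these rules the iterates themselves, and not only the values, stabilize in finitely many stages.
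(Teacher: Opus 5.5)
Your proposal is correct and follows the paper's proof essentially step for step. Each routing and allocation update weakly improves $f(\mathscr{S},\mathscr{R})=F(\mathscr{S})-\alpha T(\mathscr{S},\mathscr{R})$, and a non-decreasing value sequence over the finite set $\mathcal{I}\times\mathscr{R}_1\times\cdots\times\mathscr{R}_\kappa$ must stabilize. Your acceptance rule and tie-breaking convention add rigor the paper lacks. The paper takes $f(\mathscr{S}_i,\mathscr{R}_{i+1})\le f(\mathscr{S}_{i+1},\mathscr{R}_{i+1})$ ``by construction'' (implicitly treating line~5 as an exact $\arg\max$, even though \texttt{SGA} is used elsewhere), and it never addresses ties or specifies the stopping rule.
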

\begin{proof}
    Let $(\mathscr{S}_i,\mathscr{R}_i)$ be the current iterate. By construction of Algorithm~\ref{alg:hier_greedy}, the routing update computes
    $$\mathscr{R}_{i+1}\in\arg\min_{\mathscr{R}} T(\mathscr{S}_i,\mathscr{R}),$$
    and therefore
    $$F(\mathscr{S}_i)-\alpha T(\mathscr{S}_i,\mathscr{R}_i) \le F(\mathscr{S}_i)-\alpha T(\mathscr{S}_i,\mathscr{R}_{i+1}).$$
    Next, fixing $\mathscr{R}_{i+1}$, the allocation update computes $\mathscr{S}_{i+1}$ so that
    $$F(\mathscr{S}_i)-\alpha T(\mathscr{S}_i,\mathscr{R}_{i+1})
        \le F(\mathscr{S}_{i+1})-\alpha T(\mathscr{S}_{i+1},\mathscr{R}_{i+1}).$$
    Combining both inequalities yields
    $f(\mathscr{S}_i,\mathscr{R}_i) \le f(\mathscr{S}_{i+1},\mathscr{R}_{i+1})$. Hence, the objective sequence is monotonically non-decreasing.

    Finally, since the allocation space is finite and, by Lemma~\ref{lem:R_finitness}, the routing space is also finite, the number of feasible pairs $(\mathscr{S},\mathscr{R})$ is finite. Therefore, a monotone sequence of objective values generated over a finite feasible set must terminate after finitely many iterations.
\end{proof}

Observe that \texttt{WHIRL} alternates between two optimization subproblems induced by Theorem~\ref{thm:hierarchical}. For fixed routing, the allocation step reduces to a submodular maximization problem over the partition matroid constraint, which can be addressed by \texttt{SGA} with $\frac{1}{2}$-approximation guarantees, Section~\ref{sec::intro}. For fixed allocation, the routing step becomes a combinatorial minimization problem, which under modular costs reduces to shortest-path computation and under supermodular costs requires additional approximation analysis. To fully characterize the guarantees of Algorithm~\ref{alg:hier_greedy}, we next study the routing subproblem in detail.

\begin{algorithm}[t]
  \caption{Welfare-based Hierarchical Routing Algorithm}
  \label{alg:hier_greedy}
  \begin{algorithmic}[1]
    \Require $2$ Ground sets $\mathcal{P}$ and $\mathcal{Q}$, graph $\mathcal{G}$
    \Ensure Allocation $(\mathcal{S}_1,\ldots,\mathcal{S}_\kappa)$ and routing $(\mathcal{R}_1,\ldots,\mathcal{R}_\kappa)$

    \State Randomly initialize $\mathscr{S}$ 
    \While{not convergence}
       \State Find $\bar{\mathscr{R}}(\mathscr{S})\gets \arg\min_{\mathscr{R}\in\mathscr{R}} T(\mathscr{S},\mathscr{R})$
       \State $\mathscr{R}\gets\bar{\mathscr{R}}(\mathscr{S})$
       \State Find $\bar{\mathscr{S}}(\mathscr{R}) \gets \arg\max_{\mathscr{S}\in\mathcal{I}_{\mathcal{P}\times\mathcal{Q}}} F(\mathscr{S})-\alpha\cdot T(\mathscr{S},\mathscr{R})$
       \State $\mathscr{S}\gets \bar{\mathscr{S}}(\mathscr{R})$
    \EndWhile
    \State \Return $\mathscr{S},\mathscr{R}$
  \end{algorithmic}
\end{algorithm}

\subsection{Problem~\eqref{eq:opt_routing}: Routing under Supermodular Costs}

We now analyze the routing subproblem induced by Theorem~\ref{thm:hierarchical}. For a fixed allocation $\mathscr{S}$, the routing problem consists of selecting, for each provider, a path minimizing the routing cost $T(\mathscr{S},\mathscr{R})$.

\textit{Modular routing.} We begin with the classical case where routing costs are additive across edges, demonstrating that, under modular routing, Problem~\eqref{eq:opt_routing} is decoupled from Welfare Problem yielding in independent shortest-path finding.

\begin{lem}
    \label{lem:single_ssp}
    Consider a single provider located at node $i$ and a client at node $j$. If the routing cost is additive, i.e., $T(\mathcal{R}) = \sum_{e\in\mathcal{R}} w_e$,
    with $w_e \ge 0$, then an optimal route $\mathcal{R}^\star$ is given by the shortest path from $i$ to $j$.
\end{lem}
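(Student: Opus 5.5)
The argument is a direct unpacking of definitions: with additive cost, minimizing $T$ over the admissible route set is exactly the shortest-path problem. I will use Lemma~\ref{lem:R_finitness} and Lemma~\ref{lem:R_defined} to guarantee that a minimizer exists at all.

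\textbf{Step 1: existence.} By Lemma~\ref{lem:R_finitness}, the set $\mathscr{R}_i$ of simple directed $i$--$j$ paths is finite. By the connectivity assumption used in Lemma~\ref{lem:R_defined}, it is also non-empty. Hence
\[
\min_{\mathcal{R}\in\mathscr{R}_i}\sum_{e\in\mathcal{R}} w_e
\]
is attained by some $\mathcal{R}^\star$. By definition, this $\mathcal{R}^\star$ is a minimum-weight simple $i$--$j$ path.

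\textbf{Step 2: simple paths suffice.} A shortest-path algorithm minimizes over all directed $i$--$j$ walks, not only simple paths, so I must show the two minima coincide. Take any walk that revisits a node. Delete the closed subwalk between the two visits. Because $w_e\ge 0$, this does not increase the total weight. Repeating the deletion finitely many times produces a simple path whose weight is no larger than that of the original walk. Therefore the minimum over walks equals the minimum over $\mathscr{R}_i$, and any shortest path returned by, e.g., Dijkstra's algorithm (valid because weights are nonnegative) is optimal for $T$.

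\textbf{Main obstacle.} The only delicate point is the nonnegativity of $w_e$. It is what makes cycle removal harmless and what makes the shortest path well defined, since there are then no negative cycles. I would emphasise this point explicitly; otherwise the argument is immediate.
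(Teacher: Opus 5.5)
Your proposal is correct and takes the same approach as the paper, which simply observes that minimizing a sum of nonnegative edge weights is the shortest-path problem and appeals to standard shortest-path optimality. You make explicit what the paper leaves implicit: existence of a minimizer (finiteness via Lemma~\ref{lem:R_finitness} plus non-emptiness), and the cycle-deletion argument showing that, because $w_e\ge 0$, restricting to simple paths loses nothing. One minor refinement: non-emptiness of the admissible path set is already a standing assumption in Section~\ref{sec:statement}. That is a safer citation than ``connectivity,'' since weak connectivity of a directed graph does not by itself guarantee a directed $i$--$j$ path.
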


\begin{proof}
Since the cost of a route is the sum of nonnegative edge weights, minimizing $T(\mathcal{R})$ is equivalent to finding a shortest path in the graph. Standard shortest-path optimality implies the result.
\end{proof}

\begin{thm}
    \label{thm:additive_optimality}
    If the routing cost is additive,
    $T(\mathscr{S},\mathscr{R}) = \sum_{i=1}^\kappa \sum_{e\in \mathcal{R}_i} w_e(\mathscr{S})$,
    then the routing problem decomposes across providers, and each $\mathcal{R}_i$ is given by a shortest path from provider $i$ to the client. In this case, routing and allocation decisions are separable.
\end{thm}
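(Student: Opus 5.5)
The plan is to show that, once $\mathscr{S}$ is fixed, the objective in \eqref{eq:opt_routing} is a sum of terms each depending on only one route $\mathcal{R}_i$. Then minimize each term separately and invoke Lemma~\ref{lem:single_ssp}.

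\emph{Step 1: separability.} Fix $\mathscr{S}\in\mathcal{I}$, which is well defined by Lemma~\ref{lem:R_defined}. The edge weights $w_e(\mathscr{S})\ge 0$ are then constants. Write $T(\mathscr{S},\mathscr{R})=\sum_{i=1}^\kappa T_i(\mathcal{R}_i)$ with $T_i(\mathcal{R}_i)=\sum_{e\in\mathcal{R}_i}w_e(\mathscr{S})$. Each $T_i$ depends on $\mathscr{R}$ only through $\mathcal{R}_i$. The feasible set is the Cartesian product $\mathscr{R}_1\times\cdots\times\mathscr{R}_\kappa$, since constraint~\eqref{eqn:constrain_2_2} imposes no joint restriction across providers. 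Therefore
\[
\min_{\mathscr{R}} \sum_{i=1}^\kappa T_i(\mathcal{R}_i)=\sum_{i=1}^\kappa \min_{\mathcal{R}_i\in\mathscr{R}_i} T_i(\mathcal{R}_i).
\]
The direction ``$\ge$'' holds termwise. For ``$\le$'', concatenate the individual minimizers into one tuple; it is feasible because the set is a product. All minima exist because each $\mathscr{R}_i$ is finite by Lemma~\ref{lem:R_finitness}.

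\emph{Step 2: per-provider routing.} Each inner problem is a single-provider routing problem with nonnegative additive cost. By Lemma~\ref{lem:single_ssp}, its minimizer is a shortest path from node $i$ to $c$ under the weights $w_e(\mathscr{S})$. This gives $\mathscr{R}^\star(\mathscr{S})$ componentwise.

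\emph{Step 3: separability of routing and allocation.} This is the main obstacle, because it is false if $w_e$ genuinely depends on $\mathscr{S}$. I would argue it in the modular regime intended by the paper, namely $w_e(\mathscr{S})\equiv w_e$, consistent with the remark after Theorem~\ref{thm:problem_submodularity} that the modular routing term is independent of $\mathscr{S}$. In that regime the shortest paths $\mathcal{R}_i^\star$ do not depend on $\mathscr{S}$, so $T(\mathscr{S},\mathscr{R}^\star)$ is a constant $C$. Problem~\eqref{eq:reduced} from Theorem~\ref{thm:hierarchical} then becomes $\max_{\mathscr{S}\in\mathcal{I}}F(\mathscr{S})-\alpha C$, a pure welfare problem. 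The routes can therefore be computed once, before allocation, and the two decisions decouple.

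For weights that depend on $\mathscr{S}$ but still have this additive form, I would claim only the weaker statement. Routing still decouples across providers for each fixed $\mathscr{S}$, and this is what line~3 of Algorithm~\ref{alg:hier_greedy} uses.
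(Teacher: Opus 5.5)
Your proposal is correct and follows the paper's argument. For fixed $\mathscr{S}$, the additive cost splits into per-provider terms over the product set $\mathscr{R}_1\times\cdots\times\mathscr{R}_\kappa$, which gives $\kappa$ independent shortest-path problems via Lemma~\ref{lem:single_ssp}; you justify the min-of-sum interchange and the existence of minima more carefully than the paper does.

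Your Step~3 caveat is also warranted. The paper simply asserts that routing ``can be solved independently of allocation,'' which holds when $w_e$ does not depend on $\mathscr{S}$ (the modular regime the paper intends) but can fail otherwise. Strictly, it fails when the per-provider shortest paths change with $\mathscr{S}$, not for every $\mathscr{S}$-dependent $w_e$. For example, a common positive scalar factor $g(\mathscr{S})$ on all weights leaves the routes unchanged.
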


\begin{proof}
Under additive costs, $T(\mathscr{S},\mathscr{R})$ is separable across providers. Therefore, minimizing $T$ reduces to solving $\kappa$ independent shortest-path problems, one for each provider. Hence, routing decisions do not interact and can be solved independently of allocation.
\end{proof}


\textit{Supermodular routing as perturbation.}
We now consider the case where routing costs exhibit supermodular effects through interactions across providers. We model this as a bounded perturbation of the modular case, see Figure~\ref{fig:curvature_modularity}.

\begin{defn}[Routing Modular Deviation]
\label{def:routing_curvature}
Let $\widetilde{T}$ be a modular surrogate of the routing cost and $T$ the true (supermodular) routing cost. We define the routing modular deviation $\delta$ as the smallest constant such that, for any feasible allocation $\mathscr{S}$ and any route $\mathcal{R}$,
$$ \delta = \max_{e\in(\mathscr{S},\mathcal{R})} w_e(\mathscr{S},\mathcal{R}) -  \widetilde{w}_e(\mathscr{S},\mathcal{R}).$$
\boxend
\end{defn}

Assume that for each edge $e$,
\begin{equation}
    \label{eqn:perturbation}
    w_e(\mathscr{S}) = \widetilde{w}_e(\mathscr{S}) + \Delta_e(\mathscr{S}),
    \quad \text{with } 0 \le \Delta_e(\mathscr{S}) \le \delta, 
\end{equation}
where $w_e$ is the cost of $e$ under supermodular regime, $\widetilde{w}_e$ is the cost under modular regime and $\Delta_e$ is the difference between both utilities. Then, for any route $\mathcal{R}$,
\begin{equation}
    \label{eqn:perturbation2}
    T(\mathscr{S},\mathcal{R}) = \sum_{e\in\mathcal{R}} w_e(\mathscr{S}) \le \widetilde{T}(\mathscr{S},\mathcal{R}) + \delta |\mathcal{R}|,
\end{equation}
where $\widetilde{T}$ denotes the modular surrogate of routing cost. 

\begin{prop}[Perturbation bound]
    \label{prop:bounded_distance}
    Let $\widetilde{\mathcal{R}}^\star$ be an optimal route under the modular surrogate cost $\widetilde{T}$, and let $\mathcal{R}^\star$ be optimal under the supermodular cost $T$. Then, for any fixed $\mathscr{S}$,
    $$\widetilde{T}(\mathscr{S},\widetilde{\mathcal{R}}^\star)
        \le
        T(\mathscr{S},\mathcal{R}^\star)
        \le
        \widetilde{T}(\mathscr{S},\widetilde{\mathcal{R}}^\star) + \delta |\widetilde{\mathcal{R}}^\star|.$$
\end{prop}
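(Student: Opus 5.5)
The plan is to sandwich $T(\mathscr{S},\mathcal{R}^\star)$ using two optimality comparisons together with the edgewise decomposition \eqref{eqn:perturbation}. Throughout, $\mathscr{S}$ is fixed. Both $T(\mathscr{S},\cdot)$ and $\widetilde{T}(\mathscr{S},\cdot)$ are sums of edge weights along a route, so they are comparable edge by edge. Both minimizers exist by Lemma~\ref{lem:R_finitness} and Lemma~\ref{lem:R_defined}, since each is a minimum over a finite nonempty set of simple paths.

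\emph{Lower bound.} By \eqref{eqn:perturbation}, $\Delta_e(\mathscr{S})\ge 0$ for every edge, so for any route $\mathcal{R}$,
\[
T(\mathscr{S},\mathcal{R})=\sum_{e\in\mathcal{R}}\bigl(\widetilde{w}_e(\mathscr{S})+\Delta_e(\mathscr{S})\bigr)\ge \widetilde{T}(\mathscr{S},\mathcal{R}).
\]
Taking $\mathcal{R}=\mathcal{R}^\star$ and using that $\widetilde{\mathcal{R}}^\star$ minimizes $\widetilde{T}(\mathscr{S},\cdot)$ over the same feasible path set gives
\[
\widetilde{T}(\mathscr{S},\widetilde{\mathcal{R}}^\star)\le \widetilde{T}(\mathscr{S},\mathcal{R}^\star)\le T(\mathscr{S},\mathcal{R}^\star).
\]

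\emph{Upper bound.} Here we evaluate the true cost at the surrogate optimum rather than bounding $\mathcal{R}^\star$ directly. Optimality of $\mathcal{R}^\star$ for $T$ gives $T(\mathscr{S},\mathcal{R}^\star)\le T(\mathscr{S},\widetilde{\mathcal{R}}^\star)$. Applying \eqref{eqn:perturbation2} with $\mathcal{R}=\widetilde{\mathcal{R}}^\star$, which uses $\Delta_e\le\delta$ summed over the $|\widetilde{\mathcal{R}}^\star|$ edges, gives $T(\mathscr{S},\widetilde{\mathcal{R}}^\star)\le \widetilde{T}(\mathscr{S},\widetilde{\mathcal{R}}^\star)+\delta|\widetilde{\mathcal{R}}^\star|$. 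Chaining these two inequalities yields the right-hand inequality.

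The main obstacle is not the computation but ensuring consistency of the setting. First, the perturbation \eqref{eqn:perturbation} must be stated for the fixed $\mathscr{S}$ and must not depend on the route. In Definition~\ref{def:routing_curvature} the weights are written as functions of $(\mathscr{S},\mathcal{R})$, so I would state explicitly that for the fixed $\mathscr{S}$ the bounds $0\le\Delta_e\le\delta$ hold uniformly over all feasible routes. Second, in the multi-provider case $\mathcal{R}$ is the tuple $\mathscr{R}$ and $|\mathcal{R}|$ counts edges with multiplicity, i.e.\ $\sum_i|\mathcal{R}_i|$. Under these conventions the argument is unchanged, since both inequalities are proved pointwise per edge and then summed.
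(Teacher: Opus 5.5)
Your proof is correct and follows the paper's argument step for step: the upper bound chains optimality of $\mathcal{R}^\star$ under $T$ with \eqref{eqn:perturbation2} evaluated at $\widetilde{\mathcal{R}}^\star$, and the lower bound chains optimality of $\widetilde{\mathcal{R}}^\star$ under $\widetilde{T}$ with the pointwise inequality $T\ge\widetilde{T}$ that follows from $\Delta_e\ge 0$. You correctly credit the first upper-bound step to the optimality of $\mathcal{R}^\star$; the paper's text credits it to $\widetilde{\mathcal{R}}^\star$, which is a slip. Your remark that $0\le\Delta_e\le\delta$ must hold uniformly over routes for the fixed $\mathscr{S}$ makes explicit an assumption the paper uses only implicitly.
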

\vspace{1mm}
\begin{proof}
By optimality of $\widetilde{\mathcal{R}}^\star$ under $\widetilde{T}$,
$T(\mathscr{S},\mathcal{R}^\star) \le T(\mathscr{S},\widetilde{\mathcal{R}}^\star)$. Applying the perturbation bound \eqref{eqn:perturbation2} to $\widetilde{\mathcal{R}}^\star$, $T(\mathscr{S},\widetilde{\mathcal{R}}^\star)
\le
\widetilde{T}(\mathscr{S},\widetilde{\mathcal{R}}^\star)
+ \delta |\widetilde{\mathcal{R}}^\star|,$ which yields the upper bound. Then, for the lower bound, since $\widetilde{\mathcal{R}}^\star$ is optimal under $\widetilde{T}$, $\widetilde{T}(\mathscr{S},\widetilde{\mathcal{R}}^\star)
\le
\widetilde{T}(\mathscr{S},\mathcal{R}^\star)
\le
T(\mathscr{S},\mathcal{R}^\star),$
where the last inequality follows from $T \ge \widetilde{T}$, concluding the proof.
\end{proof}

\begin{prop}[Routing gap under perturbation]
    \label{prop:join_bounded_distance}
    Let $\widetilde{\mathscr{R}}^\star = (\widetilde{\mathcal{R}}_1^\star,\dots,\widetilde{\mathcal{R}}_\kappa^\star)$ be optimal under the modular surrogate cost $\widetilde{T}$, and let $\mathscr{R}^\star = (\mathcal{R}_1^\star,\dots,\mathcal{R}_\kappa^\star)$ be optimal under the perturbed cost $T$. Then, for any fixed $\mathscr{S}$,
    \[
        \widetilde{T}(\mathscr{S},\mathscr{R}^\star)
        \le
        \widetilde{T}(\mathscr{S},\widetilde{\mathscr{R}}^\star) + \kappa \delta |\bar{\mathcal{R}}^\star|,
    \]
    where $|\bar{\mathcal{R}}^\star| = \max_{i\in\{1,\ldots,\kappa\}} |\mathcal{R}^\star_i|$.
\end{prop}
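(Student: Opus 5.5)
The plan is to reduce the joint statement to a per-provider comparison and then sum over $\mathcal{P}$, using the same chain of inequalities as in Proposition~\ref{prop:bounded_distance}. Throughout, the modular surrogate is taken to be additive across providers, $\widetilde{T}(\mathscr{S},\mathscr{R})=\sum_{i=1}^\kappa \widetilde{T}_i(\mathscr{S},\mathcal{R}_i)$ with $\widetilde{T}_i(\mathscr{S},\mathcal{R}_i)=\sum_{e\in\mathcal{R}_i}\widetilde{w}_e(\mathscr{S})$. The true cost is written in the same form with $w_e$ in place of $\widetilde{w}_e$, and the edge-wise decomposition \eqref{eqn:perturbation} is assumed, namely $0\le\Delta_e\le\delta$.

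First, by the sandwich \eqref{eqn:perturbation}, for every route tuple we have
\[
\widetilde{T}(\mathscr{S},\mathscr{R})\le T(\mathscr{S},\mathscr{R})\le \widetilde{T}(\mathscr{S},\mathscr{R})+\delta\sum_{i=1}^\kappa|\mathcal{R}_i|,
\]
which is just \eqref{eqn:perturbation2} summed over providers. Applying the lower inequality to $\mathscr{R}^\star$ gives $\widetilde{T}(\mathscr{S},\mathscr{R}^\star)\le T(\mathscr{S},\mathscr{R}^\star)$.

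Next, I would compare $T(\mathscr{S},\mathscr{R}^\star)$ with $\widetilde{T}(\mathscr{S},\widetilde{\mathscr{R}}^\star)$. The natural first move is optimality of $\mathscr{R}^\star$ under $T$, which gives $T(\mathscr{S},\mathscr{R}^\star)\le T(\mathscr{S},\widetilde{\mathscr{R}}^\star)$, followed by the upper sandwich. This route produces $\delta\sum_i|\widetilde{\mathcal{R}}_i^\star|$, that is, path lengths of the \emph{surrogate} optimum, whereas the statement asks for $|\bar{\mathcal{R}}^\star|$ defined through $\mathscr{R}^\star$.

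To obtain the stated form, I would instead argue directly on $\mathscr{R}^\star$. By Theorem~\ref{thm:additive_optimality}, $\widetilde{\mathscr{R}}^\star$ minimizes $\widetilde{T}$ provider-wise. For each $i$, I would then try to bound $\widetilde{T}_i(\mathscr{S},\mathcal{R}_i^\star)-\widetilde{T}_i(\mathscr{S},\widetilde{\mathcal{R}}_i^\star)$ using
\[
\widetilde{T}_i(\mathcal{R}_i^\star)\le T_i(\mathcal{R}_i^\star)\le T_i(\widetilde{\mathcal{R}}_i^\star)\quad\text{and}\quad T_i(\widetilde{\mathcal{R}}_i^\star)\le \widetilde{T}_i(\widetilde{\mathcal{R}}_i^\star)+\delta|\widetilde{\mathcal{R}}_i^\star|.
\]
The middle step needs provider-wise optimality of $\mathcal{R}_i^\star$ with the other routes held fixed, which holds at a joint minimizer. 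Since $\widetilde{\mathcal{R}}_i^\star$ is a shortest path under nonnegative weights, I would then try to control $|\widetilde{\mathcal{R}}_i^\star|$ by $|\bar{\mathcal{R}}^\star|$. Summing over $i$ gives at most $\kappa\,\delta\max_i|\cdot|$, which is the claimed bound.

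The main obstacle is precisely this swap from $|\widetilde{\mathcal{R}}_i^\star|$ to $|\mathcal{R}_i^\star|$. A shortest path under $\widetilde{w}$ is not necessarily shorter in hop count than the $T$-optimal path. My fallback is either to define $|\bar{\mathcal{R}}^\star|$ as the maximum path length over both route tuples, or to bound all feasible paths by $|\mathcal{V}|-1$ via Lemma~\ref{lem:R_finitness}. Alternatively, if the $\Delta_e$ depend on $\mathscr{R}$ only through edges in $\mathscr{R}^\star$, I would charge the perturbation to the edges of $\mathcal{R}_i^\star$ directly: from $\widetilde{T}_i(\mathcal{R}_i^\star)=T_i(\mathcal{R}_i^\star)-\sum_{e\in\mathcal{R}_i^\star}\Delta_e$, this yields a term $\delta|\mathcal{R}_i^\star|$ at the cost of an extra comparison.
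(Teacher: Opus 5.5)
Your main chain is the paper's argument: Proposition~\ref{prop:bounded_distance} applied provider by provider, then summed. You have also located exactly the step that does not go through. That chain charges $\delta|\widetilde{\mathcal R}_i^\star|$, whereas the statement charges $\delta|\mathcal R_i^\star|$. No cleverer comparison will close this, because the inequality as written is false under the model \eqref{eqn:perturbation}.

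Take $\kappa=1$, a fixed $\mathscr S$, and edges $(u,v)$, $(u,x)$, $(x,v)$ with $\widetilde w_{uv}=7$, $\widetilde w_{ux}=\widetilde w_{xv}=2$, $\Delta_{uv}=0$, $\Delta_{ux}=\Delta_{xv}=2$, so $\delta=2$. Then $\widetilde{\mathcal R}^\star=\{(u,x),(x,v)\}$ with $\widetilde T=4$. Meanwhile $T(\mathscr S,\{(u,v)\})=7<8=T(\mathscr S,\{(u,x),(x,v)\})$, so $\mathcal R^\star=\{(u,v)\}$ and $|\bar{\mathcal R}^\star|=1$. The claim would read $7\le 4+2$.

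The failure is not marginal. Replace the detour by one with $L$ edges of surrogate weight $2$ and perturbation $2$, and set $\widetilde w_{uv}=4L-1$. Then the excess $\widetilde T(\mathscr S,\mathscr R^\star)-\widetilde T(\mathscr S,\widetilde{\mathscr R}^\star)=2L-1$ grows linearly, while $\kappa\delta|\bar{\mathcal R}^\star|$ stays equal to $2$. The paper's one-line proof silently makes the same relabeling of $|\widetilde{\mathcal R}_i^\star|$ as $|\mathcal R_i^\star|$.

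What is provable is the following, and it is what both your first chain and the paper's appeal to Proposition~\ref{prop:bounded_distance} actually establish:
\[
\begin{aligned}
\widetilde T(\mathscr S,\mathscr R^\star)
&\le T(\mathscr S,\mathscr R^\star)\le T(\mathscr S,\widetilde{\mathscr R}^\star)\\
&\le \widetilde T(\mathscr S,\widetilde{\mathscr R}^\star)+\delta\textstyle\sum_{i=1}^{\kappa}|\widetilde{\mathcal R}_i^\star|\\
&\le \widetilde T(\mathscr S,\widetilde{\mathscr R}^\star)+\kappa\delta\max_{i}|\widetilde{\mathcal R}_i^\star|.
\end{aligned}
\]
By Lemma~\ref{lem:R_finitness}, the same holds with $|\mathcal V|-1$ in place of the maximum. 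So your first fallback is the correct repair of the statement, not a concession.

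Your last idea cannot work. In $\widetilde T_i(\mathcal R_i^\star)=T_i(\mathcal R_i^\star)-\sum_{e\in\mathcal R_i^\star}\Delta_e$, the perturbation on $\mathcal R_i^\star$ enters with a minus sign. The best you can then reach is $\widetilde T_i(\mathcal R_i^\star)\le\widetilde T_i(\widetilde{\mathcal R}_i^\star)+\sum_{e\in\widetilde{\mathcal R}_i^\star}\Delta_e-\sum_{e\in\mathcal R_i^\star}\Delta_e$. The example shows this last difference can exceed $\delta|\mathcal R_i^\star|$, since $4-0>2$.

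Finally, use the joint chain above rather than your per-provider one. Suppose $w_e$ depends on the other providers' routes, as in \eqref{eqn:cost_utility_function}. Then a joint minimizer makes $\mathcal R_i^\star$ minimize the \emph{total} cost with the other routes held fixed, not provider $i$'s own share. Your middle inequality $T_i(\mathcal R_i^\star)\le T_i(\widetilde{\mathcal R}_i^\star)$ can therefore fail, whereas the joint chain needs no separability.
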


\begin{proof}
Applying Proposition~\ref{prop:bounded_distance} to each provider and summing over the $\kappa$ routes yields the result.
\end{proof}

\subsection{Optimality Guarantees}

After characterizing optimality gap for both processes, welfare and routing, let us prove optimality guarantees of \texttt{WHIRL}. 

\begin{thm}[Optimality of Algorithm~\ref{alg:hier_greedy}]
    \label{thm:optimality}
    Let
    $$f(\mathscr{S},\mathscr{R}) \;=\; F(\mathscr{S})-\alpha\,T(\mathscr{S},\mathscr{R}),$$
    where \(T\) is the true supermodular routing cost. Let
    \((\mathscr{S}^\star,\mathscr{R}^\star)\) be an optimal solution of
    Problem~\eqref{eqn:problem}, and let \((\bar{\mathscr{S}},\bar{\mathscr{R}})\)
    be the solution returned by Algorithm~\ref{alg:hier_greedy}. Assume that for any fixed routing \(\mathscr{R}\), the welfare subproblem admits the \(1/2\)-approximation of \cite{FeigeVondrak2006allocation}, and that the supermodular perturbation satisfies,
    $$T(\mathscr{S},\mathscr{R})
        \;\leq\;  \widetilde{T}(\mathscr{S},\mathscr{R})+\kappa\delta\,|\bar{\mathcal{R}}|,$$
    where \(|\bar{\mathcal{R}}|\) is an upper bound on the length of the longest route in the assigned routing. Then
    $$f(\bar{\mathscr{S}},\bar{\mathscr{R}})
        \;\geq\;
        \frac{1}{2}\,f(\mathscr{S}^\star,\mathscr{R}^\star)
        \;-\;
        \frac{\alpha}{2}\,\kappa\delta\,|\bar{\mathcal{R}}^\star|.$$
\end{thm}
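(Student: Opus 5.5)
The plan is to chain three comparisons: the returned pair against the modular-surrogate problem, the modular-surrogate problem against the true optimum, and the effect of the factor $\tfrac12$ coming from \texttt{SGA}.

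\textbf{Step 1: fixed-route guarantee.} At termination of Algorithm~\ref{alg:hier_greedy}, the final allocation $\bar{\mathscr{S}}$ is computed with $\bar{\mathscr{R}}$ held fixed. By Theorem~\ref{thm:problem_submodularity}, $f_{\bar{\mathscr{R}}}$ is submodular over the partition matroid $\mathcal{I}_{\mathcal{P}\times\mathcal{Q}}$, so the assumed $\tfrac12$-approximation gives
\[
f(\bar{\mathscr{S}},\bar{\mathscr{R}})\ \ge\ \tfrac12\max_{\mathscr{S}\in\mathcal{I}} f(\mathscr{S},\bar{\mathscr{R}})\ \ge\ \tfrac12 f(\mathscr{S}^\star,\bar{\mathscr{R}}).
\]
I will use the monotonicity from Lemma~\ref{lem:bounded_improvement} only to ensure that the terminal iterate is a fixed point. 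At that point $\bar{\mathscr{R}}$ is also the routing step's output for $\bar{\mathscr{S}}$.

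\textbf{Step 2: compare $f(\mathscr{S}^\star,\bar{\mathscr{R}})$ with $f(\mathscr{S}^\star,\mathscr{R}^\star)$.} This amounts to bounding
\[
T(\mathscr{S}^\star,\bar{\mathscr{R}})-T(\mathscr{S}^\star,\mathscr{R}^\star)\ \le\ \kappa\delta|\bar{\mathcal{R}}^\star|.
\]
I will sandwich through the modular surrogate. First, the assumed perturbation inequality gives $T(\mathscr{S}^\star,\bar{\mathscr{R}})\le \widetilde{T}(\mathscr{S}^\star,\bar{\mathscr{R}})+\kappa\delta|\bar{\mathcal{R}}|$. Second, the routing step is initialized through, and is compared with, the modular shortest-path routing of Theorem~\ref{thm:additive_optimality}. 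Since that routing is separable and independent of the allocation, $\widetilde{T}(\mathscr{S}^\star,\bar{\mathscr{R}})$ is at most the modular optimum, and hence
\[
\widetilde{T}(\mathscr{S}^\star,\bar{\mathscr{R}})\ \le\ \widetilde{T}(\mathscr{S}^\star,\mathscr{R}^\star)\ \le\ T(\mathscr{S}^\star,\mathscr{R}^\star),
\]
where the last inequality uses $\Delta_e\ge0$ in \eqref{eqn:perturbation}. Combining these steps, with Proposition~\ref{prop:join_bounded_distance} used to replace the route-length constant by $|\bar{\mathcal{R}}^\star|$, yields
\[
f(\mathscr{S}^\star,\bar{\mathscr{R}})\ \ge\ f(\mathscr{S}^\star,\mathscr{R}^\star)-\alpha\kappa\delta|\bar{\mathcal{R}}^\star|.
\]

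\textbf{Step 3: combine.} Substituting the bound from Step~2 into Step~1 gives
\[
f(\bar{\mathscr{S}},\bar{\mathscr{R}})\ \ge\ \tfrac12 f(\mathscr{S}^\star,\mathscr{R}^\star)-\tfrac{\alpha}{2}\kappa\delta|\bar{\mathcal{R}}^\star|,
\]
which is the claim.

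\textbf{Main obstacle.} The hard part is Step~2, and specifically the claim that $\bar{\mathscr{R}}$ is at least as good as $\mathscr{R}^\star$ under $\widetilde{T}$ at the allocation $\mathscr{S}^\star$. The route $\bar{\mathscr{R}}$ was optimized for $T(\bar{\mathscr{S}},\cdot)$, not for $\mathscr{S}^\star$. Two points need explicit justification:
\begin{itemize}
\item that the modular surrogate's optimal routes do not depend on the allocation, which is the separability of Theorem~\ref{thm:additive_optimality};
\item that $\bar{\mathscr{R}}$ is within the $\delta$-band of the modular optimum, which follows from Proposition~\ref{prop:bounded_distance} applied at $\bar{\mathscr{S}}$.
\end{itemize}
If $\bar{\mathscr{R}}$ differs from the modular routing, I will pay one extra $\kappa\delta|\bar{\mathcal{R}}|$ term. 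I will then try to absorb it by identifying $|\bar{\mathcal{R}}|$ with $|\bar{\mathcal{R}}^\star|$ as the common route-length bound, as the statement's hypothesis appears to intend. If that identification cannot be justified, the constant in front of $\kappa\delta|\bar{\mathcal{R}}^\star|$ may need to be doubled.
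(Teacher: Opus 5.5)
Your Step~1 is fine, but Step~2 has a genuine gap. The inequality $\widetilde{T}(\mathscr{S}^\star,\bar{\mathscr{R}})\le\widetilde{T}(\mathscr{S}^\star,\mathscr{R}^\star)$ holds only if $\bar{\mathscr{R}}$ minimizes the modular surrogate. But $\bar{\mathscr{R}}$ comes from line~3 as a minimizer of the \emph{true} cost $T(\mathscr{S},\cdot)$ at the previous allocation. Under supermodular costs such routes typically leave the shortest paths to reduce overlap, which is exactly the situation in Fig.~\ref{fig:quadratic_routing_overlap}. The separability in Theorem~\ref{thm:additive_optimality} says the modular-optimal routing does not depend on the allocation; it does not say that $\bar{\mathscr{R}}$ is that routing. (Also, ``at most the modular optimum'' is backwards: every routing has modular cost at least the optimum.)

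Your fallback does go through, but with the wrong constant. Write $L$ for the route-length bound and take $\widetilde{T}$ independent of the allocation, as in the $\beta=0$ case. At a fixed point $\bar{\mathscr{R}}$ minimizes $T(\bar{\mathscr{S}},\cdot)$, so $\widetilde{T}(\mathscr{S}^\star,\bar{\mathscr{R}})=\widetilde{T}(\bar{\mathscr{S}},\bar{\mathscr{R}})\le T(\bar{\mathscr{S}},\bar{\mathscr{R}})\le T(\bar{\mathscr{S}},\widetilde{\mathscr{R}}^\star)\le\widetilde{T}(\bar{\mathscr{S}},\widetilde{\mathscr{R}}^\star)+\kappa\delta L\le T(\mathscr{S}^\star,\mathscr{R}^\star)+\kappa\delta L$. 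Adding your first $\delta$-band step gives $T(\mathscr{S}^\star,\bar{\mathscr{R}})\le T(\mathscr{S}^\star,\mathscr{R}^\star)+2\kappa\delta L$. The final penalty is therefore $\alpha\kappa\delta L$, not $\tfrac{\alpha}{2}\kappa\delta L$. Identifying $|\bar{\mathcal{R}}|$ with $|\bar{\mathcal{R}}^\star|$ does not remove the second term; it only makes the two terms equal. So working from the terminal iterate proves a strictly weaker bound than the one stated.

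The missing idea is to use the monotonicity of Lemma~\ref{lem:bounded_improvement} substantively, not just to obtain a fixed point. The paper compares the output with the \emph{first} welfare update $(\mathscr{S}_1,\mathscr{R}_0)$, where $\mathscr{R}_0$ is the modular shortest-path (Dijkstra) routing used at initialization. This gives $f(\bar{\mathscr{S}},\bar{\mathscr{R}})\ge f(\mathscr{S}_1,\mathscr{R}_0)$. In its cleanest form, the rest is $f(\mathscr{S}_1,\mathscr{R}_0)\ge\tfrac12 f(\mathscr{S}^\star,\mathscr{R}_0)$, the $\tfrac12$ guarantee of line~5 with $\mathscr{R}_0$ fixed. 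Combine this with $T(\mathscr{S}^\star,\mathscr{R}_0)\le\widetilde{T}(\mathscr{S}^\star,\mathscr{R}_0)+\kappa\delta L\le\widetilde{T}(\mathscr{S}^\star,\mathscr{R}^\star)+\kappa\delta L\le T(\mathscr{S}^\star,\mathscr{R}^\star)+\kappa\delta L$. The middle inequality is now legitimate because $\mathscr{R}_0$ really is modular-optimal. Only one $\delta$-term appears, and the $\tfrac12$ factor halves it.

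This argument hinges on the first routing being the modular one. The paper's proof assumes this, even though line~1 of Algorithm~\ref{alg:hier_greedy} only says the allocation is initialized randomly. With that assumption, your Step~1 carries over verbatim with $(\mathscr{S}_1,\mathscr{R}_0)$ in place of $(\bar{\mathscr{S}},\bar{\mathscr{R}})$. The route-length constant is then that of $\mathscr{R}_0$, which the paper writes as $|\bar{\mathcal{R}}^\star|$. That identification is as loose there as in your proposal, but it is a side issue, not the source of the factor $2$.
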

\vspace{1mm}
\begin{proof}
    Let \((\mathscr{S}_0,\mathscr{R}_0)\) denote the initialization of Algorithm~\ref{alg:hier_greedy},
    where \(\mathscr{R}_0\) is obtained by Dijkstra's algorithm~\cite{DJL13} on the modular approximation \(T\) which is known to be optimal \cite{dijkstra2}.
    By Lemma~\ref{lem:bounded_improvement}, the iterations of Algorithm~\ref{alg:hier_greedy}
    are monotone, hence the final solution satisfies
    $$f(\bar{\mathscr{S}},\bar{\mathscr{R}})
        \;\geq\;
        f(\mathscr{S}_1,\mathscr{R}_0),$$
    where \(\mathscr{S}_1\) is the allocation obtained after the first welfare update. Now fix \(\mathscr{R}_0\). By the welfare approximation guarantee of
    \cite{FeigeVondrak2006allocation},
    $$F(\mathscr{S}_1)-\alpha\,\widetilde{T}(\mathscr{S}_1,\mathscr{R}_0)
        \;\geq\;
        \frac{1}{2}
        \max_{\mathscr{S}}
        \big(
            F(\mathscr{S})-\alpha\,\widetilde{T}(\mathscr{S},\mathscr{R}_0)
        \big). $$
    In particular, evaluating the right-hand side at the optimal allocation \(\mathscr{S}^\star\),
    $$f(\mathscr{S}_1,\mathscr{R}_0)
        \;\geq\;
        \frac{1}{2}
        \Big(
            F(\mathscr{S}^\star)-\alpha\,\widetilde{T}(\mathscr{S}^\star,\mathscr{R}_0)
        \Big).$$
    Since, by Theorem~\ref{thm:additive_optimality}, $\mathscr{R}_0$ is optimal for the modular approximation, by Propositions~\ref{prop:bounded_distance} and \ref{prop:join_bounded_distance},
    \[
        \begin{aligned}
        f(\mathscr{S}_1,\mathscr{R}_0)
        &\geq
        \frac{1}{2}
        \Big(
            F(\mathscr{S}^\star)
            -\alpha\big(
                T(\mathscr{S}^\star,\mathscr{R}^\star)
                +\kappa\delta\,|\bar{\mathcal{R}}^\star|
            \big)
        \Big) \\
        &=
        \frac{1}{2}
        \big(
            F(\mathscr{S}^\star)-\alpha\,T(\mathscr{S}^\star,\mathscr{R}^\star)
        \big)
        -\frac{\alpha}{2}\kappa\delta\,|\bar{\mathcal{R}}^\star| \\
        &=
        \frac{1}{2}\,f(\mathscr{S}^\star,\mathscr{R}^\star)
        -\frac{\alpha}{2}\kappa\delta\,|\bar{\mathcal{R}}^\star|.
        \end{aligned}
    \]
    Finally, using again the monotonicity of Algorithm~\ref{alg:hier_greedy},
    $$f(\bar{\mathscr{S}},\bar{\mathscr{R}})
        \;\geq\;
        f(\mathscr{S}_1,\mathscr{R}_0)
        \;\geq\;
        \frac{1}{2}\,f(\mathscr{S}^\star,\mathscr{R}^\star)
        -\frac{\alpha}{2}\kappa\delta\,|\bar{\mathcal{R}}^\star|,$$
    concluding the proof.
\end{proof}

\begin{rem}[Non-vacuous regime]
\label{cor:non_vacuous}
The bound in Theorem~\ref{thm:optimality} is non-vacuous whenever the profit is greater than $\alpha \kappa \delta |\bar{\mathcal{R}}^\star|$ where $\alpha\in[0,1]$, $\kappa$ and $\delta$ are generally small, see Section~\ref{sec:case}, and $|\bar{\mathcal{R}}^\star|\leq\text{diam}(\mathcal{G})$.
\end{rem}

\begin{figure}[t]
\centering
\begin{tikzpicture}[scale=0.7, every node/.style={font=\small}]

    \draw[thick,->] (0,0) -- (10,0) node[right] {};

    \filldraw[black] (1,0) circle (2pt);
    \filldraw[black] (9,0) circle (2pt);

    \node[below=4pt] at (1,0) {\textbf{Modular}};

    \node[below=4pt] at (5,0) {\textbf{Supermodular}};

    \node[below=4pt] at (9,0) {\textbf{Highly Curved}};

    \draw[line width=1.2pt] (1,0) -- (9,0);
    \draw[very thick, blue!70!black]
        plot[smooth] coordinates {(1,0.8) (2.3,1.0) (3.8,1.4) (5.2,1.85) (6.6,2.2) (8.0,2.3) (9,2.35)};

    \draw[dashed, red!70!black, thick] (1,0.8) -- (9,0.8);

    \node[blue!70!black, above] at (6.2,1.0) {supermodular gain};
    \node[red!70!black, above] at (7.6,0.1) {modular gain};

    \draw[<->, thick] (8.2,0.8) -- (8.2,2.35);
    \node[right, align=left] at (8.35,1.45) {gap induced by\\curvature};

\end{tikzpicture}
\caption{Conceptual representation of distance from modularity within the class of supermodular functions. Modular functions have no perturbation $\delta$, while increasing curvature corresponds to stronger increasing returns, hence, greater perturbation magnitude.}
\label{fig:curvature_modularity}
\end{figure}

\subsection{Complexity Analysis}

We clarify the computational cost of each iteration of Algorithm~\ref{alg:hier_greedy} and the total number of iterations until convergence. Each iteration consists of two steps. On the one hand, the \emph{routing update} (line $3$) uses Dijkstra's Algorithm \cite{dijkstra1959note} to compute $\kappa$ routing paths where each shortest-path computation requires $\mathcal{O}(|E| \log |V|)$ time. Then, the \emph{allocation update} (line $5$), is done using \texttt{SGA} which requires $\mathcal{O}(|Q| \cdot \kappa)$. Hence, it yields a total per-iteration complexity of $\mathcal{O}(\kappa \, |E| \log |V| + |Q| \kappa)$.

By Lemma~\ref{lem:bounded_improvement}, \texttt{WHIRL} generates a monotonically non-decreasing sequence over a finite feasible set $(S,R)$, and therefore converges in a finite number of iterations. In the worst case, the number of iterations is upper bounded by the number of feasible allocations, which is exponential in $|Q|$. However, in practice, the coordinate-wise structure leads to rapid convergence (typically a small number of iterations), as observed in the numerical results in Table~\ref{tab:performance-comparison}.

\section{Numerical Example}
\label{sec:case}

We demonstrate the proposed framework using synthetic transportation networks of increasing scale, designed to capture both modular and nonlinear routing regimes. The experiments pursue three objectives: 
(i) illustrate the complexity increase from linear to nonlinear transportation costs, 
(ii) empirically validate the hierarchical decoupling of Section~\ref{sec:problem_structure}, and 
(iii) evaluate the performance of the proposed \texttt{WHIRL} algorithm against baseline strategies.

All simulations were executed on a laptop-grade CPU (2.6 GHz Intel i7), ensuring that scalability claims reflect realistic computational settings\footnote{Code available at  \href{https://github.com/joanvendrell/WHIRL}{\texttt{WHIRL}}.}.

\subsection{Experimental Setup}

We consider a family of directed grid-like networks $\mathcal{G} = (\mathcal{V}, \mathcal{E}, w)$,
with increasing size and connectivity, Table~\ref{tab:performance-comparison}. Nodes represent transportation hubs and edges encode baseline traversal costs. A subset of $\kappa$ nodes is randomly selected as providers, while a fixed sink node represents the client.

Each provider $i$ is endowed with a submodular utility function
$f_i(\mathcal{S}_i)$, and the transportation cost is defined as
\begin{equation}
    \label{eqn:cost_utility_function}
    T(\mathscr{S},\mathscr{R}) = \sum_{\mathcal{R}_i\in\mathscr{R}} \sum_{e\in\mathcal{R}_i} |\bigcup_{\substack{\mathcal{S}_j\in\mathscr{S}\\ e\in \mathcal{R}_j}}\mathcal{S}_j|^\beta\cdot w_e,
\end{equation}
where $\beta\in\mathbb{R}_{\geq 0}$ controls the curvature of the cost function. Note that $\beta = 0$ stands for modular cost, and supermodularity effects grows with $\beta$. We compare the proposed \texttt{WHIRL} algorithm against two mentioned strategies in Section~\ref{sec::intro}:
\begin{itemize}
    \item \textbf{Expanded-domain greedy (\texttt{ED-SGA}):} \texttt{SGA}  applied over the lifted domain $X^\prime$, approximating the joint allocation-routing problem directly \cite{conforti2010extended}.
    \item \textbf{Isolated baseline:} welfare optimization and shortest-path routing solved independently, ignoring the feedback between assignment and transportation \cite{Antil2013}.
\end{itemize}

For all experiments, the reported objective value is normalized with respect to the best solution found among the three methods on each instance.

\subsection{Synthetic Network Instances}

We generated five representative directed networks with increasing scale and routing complexity. Table~\ref{tab:performance-comparison} summarizes their structural properties. As the network grows, both the number of feasible provider-to-client routes and the coupling induced by edge sharing increase, making direct joint optimization increasingly difficult.

\subsection{Performance Comparison}

Table~\ref{tab:performance-comparison} reports the normalized objective value (Accuracy) and runtime for the three methods across the five network instances. The results show a clear trade-off between scalability and optimality.

\begin{table}[t]
\centering
\caption{Comparison of utility and runtime across methods for $\beta=2$.}
\label{tab:performance-comparison}

\begin{tabular}{c c r r | c r}
\toprule
\textbf{Net.} &
\textbf{Method} &
\textbf{Utility (\$)} $\uparrow$ &
\textbf{Time (s)} $\downarrow$ &
\multicolumn{2}{c}{\textbf{Properties}} \\
\midrule

\multirow{4}{*}{G1}
& Isolated        & 110.72 & 0.003 & $|\mathcal{V}|$  & 50  \\
& \texttt{ED-SGA} & 164.19 & 0.069 & \textit{Density} & 0.2 \\
& \texttt{WHIRL}  & 164.19 & 0.031 & $\kappa$         & 5   \\
&                 &        &       & $|Q|$            & 15  \\
\midrule

\multirow{4}{*}{G2}
& Isolated        & 222.22 & 0.283 & $|\mathcal{V}|$ & 100 \\
& \texttt{ED-SGA} & 358.35 & 13.73 & \textit{Density}& 0.1 \\
& \texttt{WHIRL}  & 362.72 & 6.587 & $\kappa$        & 8   \\
&                 &        &       & $|Q|$           & 80  \\
\midrule

\multirow{4}{*}{G3}
& Isolated        & 225.74 & 0.417 & $|\mathcal{V}|$ & 200 \\
& \texttt{ED-SGA} & 231.29 & 11.16 & \textit{Density}& 0.3 \\
& \texttt{WHIRL}  & 231.29 & 7.145 & $\kappa$        & 5   \\
&                 &        &       & $|Q|$           & 20  \\
\midrule

\multirow{4}{*}{G4}
& Isolated        & 622.96 & 0.861 & $|\mathcal{V}|$ & 300 \\
& \texttt{ED-SGA} & 638.71 & 15.28 & \textit{Density}& 0.4 \\
& \texttt{WHIRL}  & 633.21 & 8.328 & $\kappa$        & 10  \\
&                 &        &       & $|Q|$           & 55  \\
\midrule

\multirow{4}{*}{G5}
& Isolated        & 1160.4 & 1.264 & $|\mathcal{V}|$ & 500 \\
& \texttt{ED-SGA} & 1201.1 & 136.2 & \textit{Density}& 0.5 \\
& \texttt{WHIRL}  & 1185.0 & 35.11 & $\kappa$        & 15  \\
&                 &        &       & $|Q|$           & 100 \\
\bottomrule
\end{tabular}
\end{table}

The results highlight three main observations. First, \emph{\texttt{ED-SGA}} achieves near-optimal performance on small and medium instances, but its runtime grows rapidly with the size of the lifted search space, and it becomes impractical on the largest network. Second, \texttt{WHIRL} consistently attains the best or near-best objective value while remaining computationally efficient, confirming that the hierarchical decomposition preserves most of the benefit of joint optimization without incurring the combinatorial burden of expanded-domain search, see Figure~\ref{fig:utility_time_tradeoff}. Third, the \emph{isolated baseline} is computationally cheap but suffers a significant degradation in solution quality as the network grows, showing that solving assignment and routing independently fails to capture the structural coupling induced by the network.

Overall, these experiments support the central claim of the paper: \texttt{WHIRL} achieves a favorable trade-off between optimality and scalability, making it suitable for coupled welfare-routing problems where direct optimization over the full combinatorial domain is computationally prohibitive.

\begin{figure}[t]
\centering
\begin{tikzpicture}
\begin{axis}[
    width=0.95\linewidth,
    height=6cm,
    xlabel={Runtime (s)},
    ylabel={Relative utility (\%)},
    xmode=log,
    xmin=0.002, xmax=200,
    ymin=60, ymax=101,
    grid=both,
    minor grid style={gray!20},
    major grid style={gray!35},
    tick label style={font=\small},
    label style={font=\small},
    legend style={
        at={(0.65,0.02)},
        anchor=south west,
        font=\small,
        fill=white
    },
    every axis plot/.append style={line width=1.4pt},
]

\addplot[
    color=blue!70!black,
    mark=*,
    mark size=2.8pt,
    mark options={fill=blue!70!black},
] coordinates {
    (0.031, 100.00)
    (6.587, 100.00)
    (7.145, 100.00)
    (8.328, 99.14)
    (35.11, 98.66)
};
\addlegendentry{\texttt{WHIRL}}

\addplot[
    color=red!75!black,
    mark=square*,
    mark size=2.8pt,
    mark options={fill=red!75!black},
] coordinates {
    (0.069, 100.00)
    (13.73, 98.80)
    (11.164, 100.00)
    (15.28, 100.00)
    (136.15, 100.00)
};
\addlegendentry{\texttt{ED-SGA}}

\addplot[
    color=green!50!black,
    mark=triangle*,
    mark size=3pt,
    mark options={fill=green!50!black},
] coordinates {
    (0.003, 67.43)
    (0.283, 61.27)
    (0.417, 97.60)
    (0.861, 97.53)
    (1.264, 96.62)
};
\addlegendentry{\emph{Isolated}}

\end{axis}
\end{tikzpicture}
\caption{Relative utility--runtime trade-off across network instances. Relative utility is computed with respect to the best method per instance. \texttt{WHIRL} consistently matches or closely approaches the best utility while requiring significantly less runtime than \texttt{ED-SGA}, particularly in larger networks.}
\label{fig:utility_time_tradeoff}
\end{figure}

\subsection{Effect of Modular Deviation}

To illustrate the impact of nonlinear welfare, we tested $\beta \in \{0, 1, 2, 3, 4\}$. Greater values of $\beta$ correspond to more curved utility functions, thus amplifying diminishing returns and making assignment decisions more structurally significant. In contrast, when $\beta$ is close to $0$, the welfare term becomes nearly modular and the advantage of coupling-aware optimization is reduced.

\begin{rem}[Relation $\beta - \delta$]
    \label{rem:relation_modular_cost}
    Given \eqref{eqn:cost_utility_function}, the modular case is captured under $\beta = 0$. Hence, \eqref{eqn:perturbation} can be re-framed to
    $|\bigcup_{\substack{\mathcal{S}_j\in\mathscr{S}\\ e\in \mathcal{R}_j}}\mathcal{S}_j|^\beta\cdot w_e = w_e + \Delta_e(\mathscr{S})$, therefore 
    $\Delta_e(\mathscr{S}) = (|\bigcup_{\substack{\mathcal{S}_j\in\mathscr{S}\\ e\in \mathcal{R}_j}}\mathcal{S}_j|^\beta - 1)\cdot w_e$. Finally, by observing that $|\bigcup_{\substack{\mathcal{S}_j\in\mathscr{S}\\ e\in \mathcal{R}_j}}\mathcal{S}_j|\leq|\mathcal{Q}|$, we can conclude that $\beta$ and $\delta$ have a directly proportional relationship $$\delta \propto |\mathcal{Q}^\beta-1|\cdot\max_{e\in\mathcal{E}}w_e.$$
\end{rem}

Empirically, we observed that the performance gap between \texttt{WHIRL} and the isolated baseline increases as $\beta$ increases. This is consistent with the theory: when modular deviation is more pronounced, the objective becomes more sensitive to assignment quality, and therefore solving routing and welfare independently becomes increasingly suboptimal, see Figure~\ref{fig:beta_sensitivity}. By contrast, \texttt{WHIRL} preserves a strong objective value across all regimes, while \texttt{ED-SGA} remains competitive in solution quality but becomes more computationally expensive.

\begin{figure}[t]
\centering
\begin{tikzpicture}
\begin{axis}[
    width=0.95\linewidth,
    height=6cm,
    xlabel={Penalty parameter $\beta$},
    ylabel={Relative utility (\%)},
    xmin=-0.05, xmax=4.05,
    ymin=50, ymax=101,
    xtick={0,1,2,3,4},
    grid=both,
    legend style={at={(0.05,0.02)}, anchor=south west, font=\small},
    tick label style={font=\small},
    label style={font=\small},
]

\addplot[
    color=blue!70!black,
    mark=*,
    line width=1.2pt,
] coordinates {
    (0,100)
    (1,90.72)
    (2,100)
    (3,98.98)
    (4,95.64)
};
\addlegendentry{\texttt{WHIRL}}

\addplot[
    color=red!70!black,
    mark=square*,
    line width=1.2pt,
] coordinates {
    (0,100)
    (1,100)
    (2,100)
    (3,100)
    (4,100)
};
\addlegendentry{\texttt{ED-SGA}}

\addplot[
    color=green!60!black,
    mark=triangle*,
    line width=1.2pt,
] coordinates {
    (0,100)
    (1,80.09)
    (2,74.03)
    (3,61.16)
    (4,56.77)
};
\addlegendentry{\emph{Isolated}}

\end{axis}
\end{tikzpicture}
\caption{Average normalized objective as a function of the penalty parameter $\beta$. Greater $\beta$ amplifies diminishing returns, increasing the advantage of the proposed coupled optimization scheme.}
\label{fig:beta_sensitivity}
\end{figure}

\section{Conclusion}
\label{sec:conclusion}

This paper studied the joint problem of submodular welfare maximization and routing under both modular and supermodular transportation costs. We showed that, despite the coupling induced by shared network resources, the composite objective preserves submodularity in the allocation variable, enabling a principled hierarchical decomposition. Building on this structure, we proposed \texttt{WHIRL}, a coordinate-wise greedy algorithm with convergence guarantees and curvature-dependent approximation bounds that explicitly capture the deviation from modular routing. Numerical experiments demonstrated that the proposed approach achieves a favorable trade-off between optimality and scalability, significantly outperforming decoupled baselines while avoiding the combinatorial burden of expanded-domain optimization. Future work will focus on tightening curvature-dependent guarantees, extending the framework to stochastic and dynamic networks, and exploring learning-based routing models to further improve adaptability in large-scale systems.

\bibliographystyle{ieeetr}
\bibliography{main}

\end{document}